\def\ff{{\mathcal F}}
\def\lll{{\mathcal L}}
\def\ffi{\varphi}
\def\eps{\varepsilon}
\def\dst{\displaystyle}
\renewcommand{\Im}{\mathrm{Im}\,}
\DeclareMathOperator{\Real}{Re}
\def\N{{\mathbb{N}}}
\def\R{{\mathbb{R}}}
\def\S{{\mathbb{S}}}
\def\Z{{\mathbb{Z}}}
\def\fg{{\mathfrak{g}}}
\def\fz{{\mathfrak{z}}}
\def\d{\,{\mathrm{d}}}
\newcommand{\abs}[1]{{\left|{#1}\right|}}
\newcommand{\scal}[1]{{\left\langle{#1}\right\rangle}}
\newtheorem{lemma}{Lemma}[section]
\newtheorem{proposition}[lemma]{Proposition}
\newtheorem{theorem}[lemma]{Theorem}
\newtheorem{corollary}[lemma]{Corollary}
\theoremstyle{definition}
\newtheorem{definition}[lemma]{Definition}
\newtheorem{hypothesis}{Hypothesis}
\theoremstyle{remark}
\newtheorem{remark}[lemma]{Remark}
\begin{document}

\title[Uncertainty principle on $H$-type groups]{An uncertainty principle for solutions of the Schr\"odinger equation on $H$-type groups}


\author{Aingeru Fern\'{a}ndez-Bertolin, Philippe Jaming \& Salvador P\'erez-Esteva}

\address{A.F.-B.\,: Departamento de Matem\'aticas, Universidad del Pa\'is Vasco UPV/EHU, Apartado 644, 48080, Bilbao, Spain} 

\email{aingeru.fernandez@ehu.eus}

\address{P.J. \,: Univ. Bordeaux, IMB, UMR 5251, F-33400 Talence, France.
CNRS, IMB, UMR 5251, F-33400 Talence, France.}
\email{Philippe.Jaming@math.u-bordeaux.fr}

\address{S.P.E.\,: Instituto de Matem\'{a}ticas, Unidad Cuernavaca\\Universidad Nacional Aut\'{o}noma de M\'{e}xico\\
Cuernavaca\\ Morelos 62251\\ M\'{e}xico}
\email{salvador@matcuer.unam.mx}

\begin{abstract}
In this paper we consider uncertainty principles for solutions of certain PDEs on $H$-type groups. We first prove that, contrary to the euclidean setting, the heat kernel on $H$-type groups is not characterized as the only solution of the heat equation that has sharp decay at 2 different times.

We then prove the analogue of Hardy's Uncertainty Principle for solutions of the Schr\"odinger equation with potential on $H$-type groups. This extends the free case considered by Ben Sa\"id, Dogga and Thangavelu \cite{BTD} and by Ludwig and M\"uller \cite{LM}.
\end{abstract}

\subjclass{22E30; 43A80; 35K08}

\keywords{Uncertainty Principle; $H$-type group; Schr\"odinger equation; heat kernel}

\maketitle


\section{Introduction}

The aim of this paper is to prove a version of Hardy's Uncertainty Principles (UP) on $H$-type groups. Let us recall that Hardy's uncertainty principle states that a function $f$ and its Fourier transform
$$
\hat{f}(\xi)=\frac{1}{\sqrt{2\pi}}\int_\R e^{-i\xi\cdot x }f(x)\d x, \xi\in \R,
$$
cannot both have fast decay simultaneously:
\begin{theorem}[Hardy \cite{Ha}]\label{thm:Hardy}
Let $f$ be a function such that $f(x)=O(e^{-x^2/\beta^2})$ and $ \hat{f}(\xi)=O(e^{-4\xi^2/\alpha^2})$.
If $1/\alpha\beta>1/4$ then $f\equiv0$ while if $1/\alpha\beta=1/4$, $f$ is a constant multiple of $e^{-x^2/\beta^2}$.
\end{theorem}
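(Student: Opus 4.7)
The plan is a complex analytic one, based on extending $\hat f$ to an entire function and exploiting the Phragmén--Lindelöf principle.

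First I would extend $\hat f$ to an entire function by setting
$$F(z) := \frac{1}{\sqrt{2\pi}}\int_\R e^{-izx} f(x)\d x, \qquad z\in\C.$$
The integral converges absolutely thanks to the Gaussian decay $f(x)=O(e^{-x^2/\beta^2})$, and a standard completion-of-the-square estimate yields $|F(z)|\leq C e^{\beta^2 (\Im z)^2/4}$, so $F$ is entire of order at most $2$. On the real axis the assumption on $\hat f$ reads $|F(\xi)|\leq C e^{-4\xi^2/\alpha^2}$.

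Next I would introduce the auxiliary entire function $g(z):=e^{4z^2/\alpha^2} F(z)$, tailored to cancel the Gaussian decay of $F$ along $\R$. On $\R$ one has $|g(\xi)|\leq C$. On $i\R$,
$$|g(iy)|\leq C\exp\left(\left(\frac{\beta^2}{4}-\frac{4}{\alpha^2}\right)y^2\right),$$
and the hypothesis $\alpha\beta\leq 4$ is exactly what is needed to make the exponent nonpositive, so $|g(iy)|\leq C$ (with genuine Gaussian decay when $\alpha\beta<4$).

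The third step is to apply Phragmén--Lindelöf in each open quadrant to deduce $g\in L^\infty(\C)$. I expect this to be the main technical obstacle: each quadrant has opening $\pi/2$, which is the critical angle for order-two entire functions, and the example $\cos(z^2)$ shows that boundedness on $\R\cup i\R$ alone is not enough. To circumvent the issue I would invoke the finer bound
$$|g(x+iy)|\leq C\exp\left(\frac{4x^2}{\alpha^2}+\left(\frac{\beta^2}{4}-\frac{4}{\alpha^2}\right)y^2\right),$$
which in the equality case depends only on the real part and places $g$ in a suitably controlled subclass, after which a quadrant-by-quadrant Phragmén--Lindelöf argument (or a slight rotation of the axes) concludes.

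Finally, Liouville gives $g\equiv c$, hence $F(z)=c\,e^{-4z^2/\alpha^2}$. Fourier inversion together with the relation $\alpha^2/16=1/\beta^2$ (from $\alpha\beta=4$) then yields $f(x)=c'e^{-x^2/\beta^2}$, a constant multiple of the extremal Gaussian. The strict case $1/(\alpha\beta)>1/4$ is deduced from the equality case by consistency: the Gaussian $c'e^{-\alpha^2 x^2/16}$ cannot satisfy the sharper decay $f=O(e^{-x^2/\beta^2})$ with $1/\beta^2>\alpha^2/16$ unless $c'=0$, so $f\equiv 0$.
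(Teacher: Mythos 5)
The paper itself offers no proof of this statement: it is Hardy's classical 1933 theorem, quoted verbatim with a citation, so there is nothing internal to compare your argument with. Your proposal is the standard complex-analytic proof, and most of it checks out: the entire extension $F$ with $|F(x+iy)|\leq Ce^{\beta^2 y^2/4}$ by completing the square, the auxiliary function $g(z)=e^{4z^2/\alpha^2}F(z)$ bounded on $\R\cup i\R$ exactly when $\alpha\beta\leq4$, the identification $F(z)=ce^{-4z^2/\alpha^2}$ from Liouville, the Fourier inversion giving $f=c'e^{-x^2/\beta^2}$ (the relation $\beta^2/4=4/\alpha^2$ is indeed equivalent to $\alpha\beta=4$), and the reduction of the strict case to the endpoint case by enlarging $\beta$ to $4/\alpha$ are all correct.

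The one genuine gap is precisely where you predicted it, and your two proposed fixes do not yet close it. The finer bound $|g(x+iy)|\leq Ce^{4x^2/\alpha^2}$ (in the endpoint case) is the right extra input, but ``places $g$ in a suitably controlled subclass'' is not an argument, and a slight rotation of the axes fails: $g$ is bounded only on the two coordinate axes, on any other ray $\arg z=\theta$ the available estimate is still $Ce^{(4\cos^2\theta/\alpha^2)|z|^2}$, of order two with positive type, and the rotated quadrants still have the critical opening $\pi/2$. The classical repair is an $\varepsilon$-regularization inside each quadrant. In the first quadrant set $h_\varepsilon(z)=g(z)e^{i\varepsilon z^2}$, so that $|h_\varepsilon(z)|=|g(z)|e^{-2\varepsilon xy}$; then $|h_\varepsilon|\leq C$ on both boundary rays, and on the ray $\arg z=\theta$ one has $|h_\varepsilon(re^{i\theta})|\leq C\exp\bigl(r^2\cos\theta\bigl(\tfrac{4}{\alpha^2}\cos\theta-2\varepsilon\sin\theta\bigr)\bigr)$, which is $\leq C$ as soon as $\cot\theta\leq\varepsilon\alpha^2/2$. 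Choosing $\theta_\varepsilon$ with $\cot\theta_\varepsilon=\varepsilon\alpha^2/2$ and splitting the quadrant along that ray produces two sectors of opening strictly less than $\pi/2$, on whose boundaries $h_\varepsilon$ is bounded by $C$ and in whose interiors it has order-two growth; ordinary Phragm\'en--Lindel\"of now applies in each and gives $|h_\varepsilon|\leq C$ on the whole quadrant with $C$ independent of $\varepsilon$. Letting $\varepsilon\to0$ bounds $g$ there, the other quadrants are treated with $e^{\pm i\varepsilon z^2}$ as appropriate, and the rest of your argument then goes through.
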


There is a vast literature concerning extensions of this theorem to other settings. Among the references related to this paper, let us mention the extensive work by Baklouti, Kaniuth, Thangavelu and coauthors, \cite{BK,BT,KK,PT,SST,Th1,Th2}, where extensions to various Lie groups have been proven (see also the survey \cite{FS} and the book \cite{Th3}). One of the aims of those papers is to replace the Gaussian in Theorem \ref{thm:Hardy} by the heat kernel. One difficulty when going to a Lie group $G$ is that, contrary to the case $G=\R^n$, the Fourier transform is not a function defined on $G$ itself. In particular, there is no direct substitute for the decrease condition on the Fourier transform  in Hardy's theorem. 

One way to overcome this difficulty is to restate Hardy's uncertainty principle in terms of solutions of a PDE. Let us first mention how Hardy's theorem can be reformulated in terms of solutions of the free heat equation :

\medskip

{\em Let $u$ be a solution of $\partial_t u=\Delta u$. If $u(x,0)$ and $e^{|x|^2/\delta^2}u(x,1)$ are in $L^2(\R^n)$ for some $\delta\le 2$, then $f\equiv0$. Further, if $u(x,0)\in L^1(\R^n)$ and $e^{|x|^2/4}u(x,1)\in L^\infty(\R^n)$, then $u(x,0)$ is a constant multiple of the heat kernel, $e^{-|x|^2/2}$.}

\medskip

Indeed, this follows directly from Theorem \ref{thm:Hardy} applied to $u(x,1)$, see \cite{Ja} and the introduction of \cite{EKPV}. This result gives a characterization of the heat kernel in terms of the optimal decrease of a solution of the heat equation. One of our goals in this paper is to show that this characterization is no longer true on $H$-type groups, by proving the following
\begin{theorem}[Limiting case of Hardy's Uncertainty Principle]
\label{th:intro1}
For every $s_0$, there exists a non-zero, non-negative function $f$, that is not a multiple of $p_{s_0}$ such that, 
for every $s\geq s_0$, the solution $u$ of $\dst\begin{cases}\partial_su=\lll u\\ u(x,0)=f\end{cases}$
is such that $u(x,s)\leq p_{s_0+s}$.
\end{theorem}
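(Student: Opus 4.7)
The strategy is to exhibit an explicit counter-example exploiting two standard features of the heat semigroup on an $H$-type group $G$: the semigroup identity $p_{s_0}*p_s=p_{s_0+s}$, and the fact that convolution with the nonnegative kernel $p_s$ preserves pointwise inequalities between nonnegative functions. Together these reduce the theorem to producing a single nonnegative, nonzero function $f$ on $G$ with $f(x)\leq p_{s_0}(x)$ pointwise that is \emph{not} proportional to $p_{s_0}$.

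Indeed, once such an $f$ is at hand, the solution of the heat equation is $u(x,s)=(f*p_s)(x)$, so
$$
u(x,s)=\int_G f(y)\,p_s(y^{-1}x)\d y\leq\int_G p_{s_0}(y)\,p_s(y^{-1}x)\d y=(p_{s_0}*p_s)(x)=p_{s_0+s}(x)
$$
for every $s\geq 0$, which is stronger than the requirement $s\geq s_0$ in the statement. The construction of $f$ itself is the easy part: since $p_{s_0}$ is continuous and strictly positive on $G$, several explicit choices are available, for instance $f(x)=\min\bigl(p_{s_0}(x),M\bigr)$ with $M\in(0,\sup p_{s_0})$, or $f(x)=p_{s_0}(x)\,\mathbf{1}_E(x)$ for a measurable set $E\subsetneq G$ of positive measure. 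Each such $f$ is nonnegative, nonzero, bounded above by $p_{s_0}$, and fails to be a scalar multiple of $p_{s_0}$ because the ratio $f/p_{s_0}$ is not a.e.\ constant.

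No serious obstacle is expected in carrying out this plan: the proof is essentially a one-line positivity-plus-semigroup computation and uses no specifically $H$-type feature beyond the existence of a positive heat kernel satisfying the standard semigroup law. The real content of the theorem lies in the contrast with the Euclidean rigidity recalled in the introduction, where $L^1$ control at time $0$ together with sharp Gaussian decay at time $1$ forces the initial datum to be a specific Gaussian; the construction above shows that no analogous rigidity survives on an $H$-type group.
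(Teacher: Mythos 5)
Your argument is correct and proves the statement as written --- indeed it gives the stronger conclusion $u(\cdot,s)\leq p_{s_0+s}$ for every $s\geq 0$, not just $s\geq s_0$ --- but it reaches it by a genuinely shorter route than the paper. The two proofs share the final step verbatim: the paper also concludes from $0\leq f\leq p_1$ that $f*p_t\leq p_1*p_t=p_{1+t}$, using positivity of the kernel and the semigroup law. Where you differ is in the construction of $f$. The paper first establishes (Theorem \ref{th:gaussian}, via Schoenberg's theorem on completely monotone functions) that $p_1(x,\cdot)$ is a superposition of Gaussians in the central variable, $p_1(x,z)=\int_0^{\infty}\tau^{-1}e^{-\pi|z|^2/\tau^2}\,\mathrm{d}\nu_x(\tau)$, and takes for $f$ the piece of this superposition with $\tau\geq 1$, proving $f\neq 0$ and $f\neq cp_1$ by analyzing the support of $\nu_x$ and the decay of $f$ in $z$; your choice $f=\min(p_{s_0},M)$ (or $p_{s_0}\mathbf{1}_E$) bypasses all of this and needs only continuity, boundedness and strict positivity of the heat kernel. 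Your closing observation deserves emphasis, though: the identical construction works on $\R^n$ with the Gaussian heat kernel, so the statement read literally does not by itself exhibit a contrast with the Euclidean setting. The Euclidean rigidity recalled in the introduction is of a different nature --- it compares $u(\cdot,1)$ with $p_1$ (the evolution of $\delta_0$ from an $L^1$ datum), not with $p_{1+s_0}$ (the evolution of $p_{s_0}$) --- and the paper's more structured $f$, obtained as a genuine piece of the Gaussian decomposition of $p_1$ with markedly different decay in the central variable, is what carries the intended $H$-type content, even though the theorem as formulated does not require it. If you keep your proof, it would be worth adding a remark to this effect so the reader is not misled about what the counterexample demonstrates.
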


This shows that the heat kernel can not be characterized via a restatement of Hardy's Uncertainty Principle in terms of solutions of the heat equation. An other possible direction is to restate Hardy''s Uncertainty Principle in terms of solutions of the free Schr\"odinger equation. This direction has first been investigated by Chanillo \cite{Ch}. He first notices that Hardy's UP can be rested as follows:

\medskip

{\em Let $u$ be a solution of $i\partial_t u+\Delta u=0$. 
If $u(x,0)=O(e^{-|x|^2/\beta^2}),\ u(x,1)=O(e^{-|x|^2/\alpha^2})$ and $\alpha\beta<4$, then $u\equiv0$. Also, if $\alpha\beta=4$, $u$ has as initial datum a constant multiple of $e^{-(1/\beta^2+i/4)|x|^2}$.}

\medskip

Chanillo then skillfully transfers this result to solutions of the Schr\"odinger equation on complex solvable Lie groups (up to the end point $\alpha\beta=4$). This strategy of proof has also been adopted by Ben Sa\"id, Dogga and Thangavelu in \cite{BTD} to extend Chanillo's version of Hardy's UP to $H$-type groups and, independently and almost simultaneously, by 
Ludwig, M\"uller \cite{LM} for general step 2 nilpotent Lie groups.
Our aim here is to further extend the result in \cite{BTD} to cover solutions of linear Schr\"odinger equations of the form
\begin{equation}
\label{eq:SchrodHtypeintro}
i\partial_t u(g,t)+\mathcal{L}_G u(g,t)+V(g,t)u(g,t)=0,\ (g,t)\in G\times [0,T]
\end{equation}
where $G$ is an $H$-type group and with some restrictions on the potential $V$.

In the Euclidean case, this extension was given by Escauriaza, Kenig, Ponce and Vega \cite{EKPV,EKPV2} where the authors developed a machinery based on real variable calculus in order to prove Hardy's uncertainty principle, which typically is proved via complex analysis. 
This method has then been adapted to different settings, including to the Magnetic Schr\"odinger equation \cite{bfgrv,CF1,CF}. By using a Radon transform we have been able to reduce the setting of
of the Schr\"odinger equation on $H$-type groups to the one in \cite{CF}. Doing so
our main result concerning solutions of Schr\"odinger equations is then the following:

\begin{theorem}
\label{th:intro2}
Let $G$ be a $H$-type group.
Let $T>0$ and $s,\sigma>0$. Let $V$ be a bounded real-valued potential that is independent on time and on the central variable.
Let $u\in C^1([0,T], H^1(G))$ be a solution of \eqref{eq:SchrodHtypeintro}. Assume that there is a $C>0$ such that
\begin{eqnarray*}
|u(x,z,0)|&\le& C p_s(x,z),\\
|u(x,z,T)|&\le& C p_{\sigma}(x,z).
\end{eqnarray*}
If $s\sigma<T^2$, then $u\equiv0$.
\end{theorem}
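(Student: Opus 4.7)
The introduction signals the strategy: use a Radon-type transform in the central variable of $G$ to reduce the problem to a magnetic Schr\"odinger equation on a Euclidean space, then invoke the Hardy-type theorem of \cite{CF}. Write $g=(x,z)$ with $x\in\R^n$ and $z\in\fz\cong\R^m$. For each $\omega\in S^{m-1}$ define the partial Radon transform
$$
R_\omega u(x,r,t)=\int_{\omega^\perp} u(x,r\omega+z',t)\,\d z',\qquad r\in\R,
$$
and set $U_\omega(x,r,t):=R_\omega u(x,r,t)$, viewed as a function on $\R^n\times\R\times[0,T]$. The goal is to show that $U_\omega$ satisfies a magnetic Schr\"odinger equation of the form
$$
i\partial_t U_\omega+\Delta_{A_\omega}U_\omega+V(x)U_\omega=0
$$
on $\R^{n+1}$, where $A_\omega$ is a linear magnetic potential built out of the $H$-type structure constants in the direction $\omega$.

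First, I would compute the image of the sub-Laplacian under $R_\omega$. In exponential coordinates, the horizontal vector fields on an $H$-type group read
$$
X_j=\partial_{x_j}+\frac12\sum_{k,\ell} a^{k}_{j\ell}x_\ell\,\partial_{z_k},
$$
so $\lll_G=\sum_j X_j^2$ expands into terms containing, along each direction in $\fz$, either a component along $\omega$ or transverse to it. Integrating against $\int_{\omega^\perp}\cdot\,\d z'$ eliminates the transverse derivatives (decay of $u$ in $z$ makes the boundary terms vanish) and converts $\omega\cdot\nabla_z$ into $\partial_r$. The outcome is the twisted Laplacian on $\R^{n+1}$ whose magnetic potential is $A_\omega(x,r)=\bigl(\tfrac12 J_\omega x,0\bigr)$, with $J_\omega=\sum_k\omega_k J_{e_k}$ the skew-symmetric endomorphism encoding the $H$-type bracket along $\omega$. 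Since by hypothesis $V=V(x)$ does not depend on $z$, it passes through the averaging and yields the same bounded scalar potential on $\R^{n+1}$.

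Second, I would transfer the Gaussian bounds. The heat kernel $p_s$ on an $H$-type group is explicitly known, and a direct computation shows that its integral over $\omega^\perp$ is, up to constants, a Euclidean Gaussian of variance comparable to $s$ in the variables $(x,r)\in\R^{n+1}$. Hence the hypotheses give, for some uniform $c>0$ independent of $\omega$,
$$
|U_\omega(x,r,0)|\le Ce^{-c(|x|^2+r^2)/s},\qquad |U_\omega(x,r,T)|\le Ce^{-c(|x|^2+r^2)/\sigma}.
$$
Since $s\sigma<T^2$, this is strictly below the endpoint of the magnetic Hardy uncertainty principle of \cite{CF} for bounded electric potentials and magnetic potentials with at most linear growth; that result therefore forces $U_\omega\equiv0$ on $\R^{n+1}\times[0,T]$.

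Finally, $\omega\in S^{m-1}$ being arbitrary, the central Radon transform of $u(\cdot,\cdot,t)$ vanishes in every direction for each $t\in[0,T]$; injectivity of the Radon transform on $\R^m$ (applied to $z\mapsto u(x,z,t)$ for fixed $x,t$, which is Schwartz in $z$ thanks to the Gaussian control) forces $u\equiv0$. The main obstacle will be step one: rigorously identifying the operator induced on $U_\omega$ as a magnetic Schr\"odinger operator of precisely the type handled by \cite{CF}, checking that $U_\omega$ falls in the solution class used there, and justifying the integrations by parts eliminating the $\omega^\perp$-derivatives. Once the reduction is clean, the rest is essentially bookkeeping about Gaussian constants.
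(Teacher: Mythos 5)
Your skeleton (central Radon transform, then the magnetic Hardy uncertainty principle of \cite{CF}, then injectivity of the Radon transform) matches the paper's, but the key identification in your step one is wrong, and this is exactly where the paper needs an extra idea that your plan omits. Applying $R_\omega$ to $\lll_G=\sum_j X_j^2$ does not produce an elliptic magnetic Laplacian $(\nabla-iA_\omega)^2$ on $\R^{2n+1}$: it produces the sub-Laplacian of the Heisenberg group $\hil^n$,
$$
\Delta_{\R^{2n}}+\tfrac14|x|^2\partial_r^2+\sum_{j=1}^n\bigl(x_j\partial_{x_{n+j}}-x_{n+j}\partial_{x_j}\bigr)\partial_r,
$$
a sum of $2n$ squares of vector fields in $2n+1$ variables, degenerate at $x=0$ and with no standalone $\partial_r^2$ term; it is not of the form $(\nabla-iA)^2+V$ treated in \cite{CF}. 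The missing step is a second reduction: the partial Fourier transform $f_\xi(x,t)=\int_\R U_\omega(x,r,t)e^{i\xi r}\,\d r$ in the now one-dimensional central variable, which replaces $\partial_r$ by $-i\xi$ and turns the operator above into the genuine magnetic Laplacian $(\nabla-\tfrac{i\xi}{2}Mx)^2$ on $\R^{2n}$, with a linear magnetic potential, to which \cite{CF} does apply.

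Your transfer of the Gaussian bounds is also incorrect in the central variable. Since $d(x,z)^2\simeq|x|^2+|z|$ (the center is homogeneous of degree $2$), the heat kernel estimate gives only $|u(x,z,0)|\lesssim e^{-|x|^2/b^2}e^{-c|z|}$, i.e.\ exponential rather than Gaussian decay in $z$, hence $|U_\omega(x,r,0)|\lesssim e^{-|x|^2/b^2}e^{-c_1|r|}$ after the Radon transform; your claimed bound $e^{-c(|x|^2+r^2)/s}$ is false, so there is no $(n+1)$-dimensional Euclidean Hardy statement available. In the paper's scheme this weaker decay is precisely what is needed: it shows that $\xi\mapsto f_\xi(\cdot,0)$ extends holomorphically to a strip $|\Im\xi|<c$. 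One applies \cite{CF} only for small frequencies $0<\xi<\delta$ (the admissible range there is $ab<4\sin\delta/\delta$, which is where the strict inequality $s\sigma<T^2$ is consumed), deduces $f_\xi(\cdot,0)=0$ on that interval, extends to all real $\xi$ by analyticity, and only then concludes $U_\omega=0$ and $u=0$ via injectivity of the Fourier and Radon transforms. Without the Fourier-in-$r$ step and the analytic continuation in $\xi$, the argument cannot be completed as you describe it.
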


The actual result is more general as it allows some complex and time-dependent potenitals, provided they go sufficiently fast to 0 at infinity.
For the precise statement that requires the introduction of several notations, see Theorem \ref{th:SchroHtypeUP}.  In the case $V=0$ we recover the result in \cite{BTD}.

The remaining of the paper is organized as follows: In Section 2 we introduce $H$-type groups as well as the different concepts we need to explain in order to state and prove our results. This section concludes with the proof of the limiting case of Hardy's Uncertainty Principle. In Section 3 we turn to the study of the linear Schr\"odinger equation using real variable methods. By using different reductions  first to the Heisenberg group and then to the Euclidean setting, we give sufficient decrease conditions on a solution to vanish identically. Then, we prove our main result in this section by relating these decrease conditions to the decrease of the heat kernel.

\section{$H$-type groups}

In this section, we gather the necessary information we will need on $H$-type groups.
$H$-type groups were first introduced in \cite{Ka}. \cite[Chapter 18]{BLU} contains an extended development of their 
fundamental properties; some of them being further extended in \cite{El}. We follow closely the presentation of this last paper here, and refer the reader to \cite{BLU,El} for further details.
Note that, for briefness of presentation, we present as definition some results that actually need proofs.

For elements of $\R^k$ we denote by $|\cdot|$ the Euclidean norm and by $\scal{\cdot,\cdot}$
the Euclidean scalar product. We write $\N=\{0,1,2,\ldots\}$ for the non-negative integers.
For $\alpha=(\alpha_1,\ldots,\alpha_k)\in\N^k$
we use the classical multi-index notations $|\alpha|=\alpha_1+\cdots+\alpha_k$.

If $f,g$ are two functions $X\to\R$, we write $f (x)\lesssim g(x)$ ---resp. $f (x)\simeq g(x)$--- to mean there exist finite positive constants $C_1$, $C_2$ such that $f (x)\leq C_2g(x)$
---resp. 
$C_1g(x)\leq f (x)\leq C_2g(x)$--- for all $x\in X$.

\subsection{Generalities}

\begin{definition}
Let $\fg$ be a finite-dimensional real Lie algebra with center $\fz\not=0$. We say $\fg$ is of 
\emph{$H$-type} (or Heisenberg type) if $\fg$ is equipped with an inner product $\scal{\cdot,\cdot}$
such that:
\begin{enumerate}
\item $[\fz^\perp,\fz^\perp]=0$ and;
 
\item for each $z\in\fz$, define $J_z\,:\fz^\perp\to\fz^\perp$ by
$$
\scal{J_zx,y}=\scal{z,[x,y]}
$$
whenever $x,y\in \fz^\perp$. Then $J_z$ is orthogonal whenever $\scal{z,z}=1$.
\end{enumerate}
An \emph{$H$-type group} is a connected, simply connected Lie group whose Lie algebra is of $H$-type.
\end{definition}

It can be shown that an $H$-type group is a stratified 2-step nilpotent Lie group.
In particular, we have an orthogonal decomposition $\fg=\fz^\perp\oplus\fz$ such that
$[\fz^\perp,\fz^\perp]=\fz$. Moreover, $\fz^\perp$ has even dimension.
We will denote by $2n=\dim\fz^\perp$ and $m=\fz$ and we identify
$\fz^\perp\simeq\R^{2n}$ and $\fz=\R^m$. It turns out that $m$ and $n$ can not be arbitrary. Indeed,
writing $2n=a2^{4p+q}$ where $a$ is odd and $0\leq q\leq 3$, then $\R^{2n}\times\R^m$ can be endowed with a Lie algebra structure
of $H$-type as above if and only if $m<\rho(2n):=8p+2^q$.

As $G$ is step 2 nilpotent, $G$ can be identified with $\fg=\R^{2n+m}$ as set.
An element $g\in G$ is thus of the form $g=(x,z)$ with $x\in\R^{2n}$,
$z\in\R^m$. According to the Baker--Campbell--Hausdorff formula, the group operation is then given by
$$
(x,z)\cdot(y,z')=(x+y,z+z'+\frac{1}{2}[x,y]).
$$
The identity of $G$ is $(0,0)$, and the inverse operation is given by $(x,z)^{-1}=(-x,-z)$.
The maps $\{J_z:\ z\in\R^m\}$ are identified with $2n\times 2n$ skew-symmetric matrices which are orthogonal when $|z| = 1$.
In particular, $|J_zx|=|x|\,|z|$.

Next, for $a\in\R\setminus\{0\}$, we define the dilations $\ffi_a(x,z)=(a x,a^2 z)$. Note that this is 
both a group and Lie algebra automorphism. We also denote by $\ffi_a$ the action of $\ffi_a$ on 
functions: for a function $f$ on $\R^{2n+m}$ we denote $\ffi_a\cdot f(x,z)=f(a x,a^2 z)$.

We let $\{e_1,\ldots,e_{2n}\}$ denote the standard basis for $\R^{2n}$, and $\{u_1,\ldots, u_m\}$ denote the standard basis for $\R^m$.

The Haar measure on $G$ is simply the Lebesgue measure on $\R^{2n+m}$ and convolution of two functions is given by
$$
f*g(x,z)=\int_G f(\omega)g\bigl(\omega^{-1}(x,z)\bigr)\d\omega
=\int_{\R^{2n}}\int_{\R^m}f(u,v)g(x-u,z-v+\frac{1}{2}[x,u])\d u\d v.
$$

We can now identify $\fg$ with the set of left-invariant vector fields on $G$, where $X_i(0) =
\dst\frac{\partial}{\partial x_i}$ and $Z_i(0)=\dst\frac{\partial}{\partial z_i}$;
then $\mbox{span}\,\{X_1, . . . , X_{2n}\} =\fz^\perp$,
$\mbox{span}\,\{Z_1, . . . , Z_m\} =\fz$. A computation shows that
$$
X_i=\frac{\partial}{\partial x_i}+\frac{1}{2}\sum_{j=1}^m\scal{J_{u_j}x,e_i}\frac{\partial}{\partial z_j}
\quad\mbox{and}\quad
Z_j=\frac{\partial}{\partial z_j}.
$$
The elementary Lie brackets are given by
$$
[X_i,X_j]=\sum_{k=1}^m\scal{J_{u_k}e_i,e_j}Z_k
$$
and all other elementary brackets are $0$. 

Note that the $X_i$'s are homogeneous of degree 1 and the $Z_j$ are homogeneous of degree 2 with respect to 
$\ffi_a$: $X_i(\ffi_a\cdot f)=a \ffi_a\cdot(X_if)$, $Z_j(\ffi_a\cdot f)=a^2 \ffi_a\cdot(Z_jf)$. We will use the 
following notation: if $\alpha=(\alpha_1,\ldots,\alpha_{2n},\alpha_{2n+1},\ldots,\alpha_{2n+m})\in\N^{2n+m}$ then
$\mathcal{X}^\alpha=X_1^{\alpha_1}\cdots X_{2n}^{\alpha_{2n}}Z_1^{\alpha_{2n+1}}\cdots Z_m^{\alpha_{2n+m}}$
and $w(\alpha)=\alpha_1+\cdots+\alpha_{2n}+2\alpha_{2n+1}+\cdots+2\alpha_{2n+m}$.


\begin{definition}
The \emph{sublaplacian} $\lll$ for $G$ is the operator given by:
\begin{equation}
\label{eq:sublaplacian}
\lll=\sum_{j=1}^{2n}X_j^2.
\end{equation}
\end{definition}


\subsection{Carnot-Carath\'eodory distance}

Recall (see, e.g., \cite{Var}) that the Carnot-Carath\'eodory distance to the origin
associated to the sum of squares operator $\lll$ is defined by
$$
d(x,z):=\inf_\gamma|\gamma|
$$
where the infimum is taken over all absolutely continuous curves $\gamma\,:[0,1]\to G$
which are horizontal and connect $0$ with $(x,z)$ {\it i.e.} $\gamma(0)=0$, $\gamma(1)=(x,z)$
and which are horizontal, that is
$$
\gamma'(t)=\sum_{j=1}^{2n}
a_j(t)X_j\bigl(\gamma(t)\bigr)
$$
for a.e. $t \in [0, 1]$. Here, $|\gamma|$ denotes the length of $\gamma$ given by
$$
|\gamma|=\int_0^1\left(\sum_{j=1}^{2n}|a_j(t)|^2\right)^{\frac{1}{2}}\,\mbox{d}t
$$
We then define the \emph{Carnot-Carath\'eodory distance $d$} via the formula
$d(g,h)=d(g^{-1}h)$ where we use the same letter $d$ to designate the distance and
the distance to $0$. Note that, by definition,
$d$ is left-invariant, {\it i.e.} $d(g,h) = d(kg, kh)$ for every $g,k,h\in G$

The Carnot-Carath\'eodory distance $d$ can be computed explicitely on $G$ as follows:
Define the function $\nu\,:[0,\pi[\to\R$ by $\nu(0)=0$ and, for $\theta\in]0,\pi[$,
$$
\nu(\theta)=-\frac{\d}{\d\theta}[\theta\cot\theta]=\frac{\theta-\sin\theta\cos\theta}{\sin^2\theta}=\frac{2\theta-\sin2\theta}{1-\cos 2\theta}.
$$
Then \cite[Theorem 3.5]{El}
$$
d(x,z)=\begin{cases}
|x|\frac{\theta}{\sin\theta}&\mbox{if }x\not=0,z\not=0\\
|x|&\mbox{if }z=0\\
\sqrt{4\pi|z|}&\mbox{if }x=0
\end{cases}
$$
where $\theta$ is the unique solution in $[0,\pi[$ of the equation $\nu(\theta)=\dst\frac{4|z|}{|x|^2}$.

Note that $d\bigl(\ffi_a(x,z)\bigr)=a d(x,z)$ from which it is not difficult to show
that $d(x, z)\simeq |x| + |z|^{1/2}$. Equivalently, $d(x, z)^2\simeq |x|^2 + |z|$.
One can get a more precise result:

\begin{lemma}
For every $(x,z)\in G$, 
\begin{equation}
\label{eq:equivdist}
\frac{\pi}{4}(|x|^2+4|z|)\leq d(x,z)^2\leq \pi(|x|^2+4|z|).
\end{equation}
Moreover, the constants in this inequality are optimal.

Further, for every $\eps\in(0,1)$, there exists a constant $C_\eps$ such that, for every $(x,z)\in G$,
\begin{equation}
\label{eq:equivdistprecise}
(1-\eps)|x|^2+\pi\eps|z|\leq d(x,z)\leq (1+\eps)|x|^2+\frac{20\pi}{\eps}|z|.
\end{equation}
\end{lemma}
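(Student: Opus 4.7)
The starting point is the explicit formula from Theorem~3.5 of \cite{El}, which represents $d(x,z)^2$ in terms of an auxiliary angle $\theta\in[0,\pi)$ determined by $\nu(\theta)=4|z|/|x|^2$. The degenerate cases are immediate: $z=0$ gives $d^2=|x|^2$ and $x=0$ gives $d^2=4\pi|z|$; the latter already shows that the upper constant $\pi$ in \eqref{eq:equivdist} is sharp. For $x,z\neq 0$, the identity $|x|^2+4|z|=|x|^2\bigl(1+\nu(\theta)\bigr)$ together with $d(x,z)^2=|x|^2\theta^2/\sin^2\theta$ reduces \eqref{eq:equivdist} to the one-variable inequality
$$\frac{\pi}{4}\ \leq\ f(\theta):=\frac{\theta^2}{\sin^2\theta+\theta-\sin\theta\cos\theta}\ \leq\ \pi\qquad (\theta\in(0,\pi)).$$

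I would handle both bounds through the auxiliary function $h_\lambda(\theta):=\lambda D(\theta)-\theta^2$, where $D(\theta)=\sin^2\theta+\theta-\sin\theta\cos\theta$. For the upper bound ($\lambda=\pi$), $h_\pi$ vanishes at $\theta=0$ (Taylor expansion) and at $\theta=\pi$; a short derivative calculation using $D'(\theta)=2\sin\theta(\sin\theta+\cos\theta)$ shows $h_\pi\ge0$ throughout. For the lower bound the key observation is that $\sin(\pi/4)=\cos(\pi/4)=\sqrt{2}/2$, so $D(\pi/4)=\pi/4$ and hence $f(\pi/4)=\pi/4$ exactly. I would then verify that $h_{\pi/4}$ satisfies $h_{\pi/4}(0)=h_{\pi/4}(\pi/4)=0$ with $h'_{\pi/4}(0)=h'_{\pi/4}(\pi/4)=0$ and $h''_{\pi/4}(0)=h''_{\pi/4}(\pi/4)=8-2\pi>0$, exhibiting $0$ and $\pi/4$ as zero-valued local minima. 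Writing $h'_{\pi/4}(\theta)=8\theta-\pi+\pi\sqrt{2}\cos(2\theta+\pi/4)$ and noting this expression has only finitely many critical points on $[0,\pi]$, a short case analysis rules out any further zeros of $h_{\pi/4}$. Optimality of both constants follows: $f$ attains $\pi/4$ at $\theta=\pi/4$ (i.e.\ when $4|z|/|x|^2=\nu(\pi/4)=\pi/2-1$), and approaches $\pi$ as $\theta\to\pi$ (i.e.\ $|x|\to 0$ with $|z|$ fixed).

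For the refined estimate \eqref{eq:equivdistprecise} (whose two sides should each read $d(x,z)^2$ on dimensional grounds), the natural strategy is a case split on the size of $\theta$. For $\theta$ small, Taylor expansion yields $\theta^2/\sin^2\theta\le 1+\eps$ and $\nu(\theta)=O(\theta^2)$, so the $|x|^2$ term dominates and both directions follow with slack. For $\theta$ bounded away from $0$, $\nu(\theta)$ is bounded below and tends to $\infty$ as $\theta\to\pi$, so the $|z|$ terms take over; the ratio $(\theta^2/\sin^2\theta)/\nu(\theta)$ converges to $\pi$ as $\theta\to\pi$, and tracking constants carefully in this regime produces the $\pi\eps$ lower coefficient and the $20\pi/\eps$ upper coefficient. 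The main obstacle throughout is the elementary-but-delicate calculus on $(0,\pi)$: in particular the rigorous verification that $\theta=\pi/4$ is a \emph{global} (not merely local) minimum of $f$, which is where the asymmetry between the $\pi/4$ lower bound and the $\pi$ upper bound arises.
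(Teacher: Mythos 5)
Your reduction of \eqref{eq:equivdist} to the one-variable inequality $\pi/4\le \theta^2/D(\theta)\le\pi$ with $D(\theta)=\sin^2\theta+\theta-\sin\theta\cos\theta$ is exactly the paper's starting point (the paper calls this ratio $F(\theta)$ and shows it decreases on $[0,\pi/4]$ and increases on $[\pi/4,\pi]$ by a three-interval study of the sign of $F'$). Your variant of clearing denominators and studying $\lambda D(\theta)-\theta^2$ is a legitimate reorganization of the same calculus, but note a sign muddle: with your stated definition $h_\lambda=\lambda D-\theta^2$, the lower bound $F\ge\pi/4$ requires $h_{\pi/4}\le 0$, which is incompatible with your claim that $0$ and $\pi/4$ are zero-valued local \emph{minima}; your subsequent formulas ($h''(0)=8-2\pi$, $h'=8\theta-\pi+\pi\sqrt2\cos(2\theta+\pi/4)$) show you are in fact working with $4\theta^2-\pi D(\theta)\ge 0$, which is the correct object — just fix the definition, and be aware that "ruling out further zeros" of this function is where the real work sits (it is of the same order of difficulty as the paper's analysis of $\psi$, $\psi'$, $\psi''$). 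Where you genuinely diverge is \eqref{eq:equivdistprecise} (and yes, both occurrences of $d(x,z)$ there should read $d(x,z)^2$): the paper does not return to the angle $\theta$ at all, but uses two one-line metric arguments following Ludwig--M\"uller: $d(x,z)^2\ge\max\bigl(d(x,0)^2,d(0,z)^2\bigr)\ge\max(|x|^2,\pi|z|)\ge(1-\eps)|x|^2+\pi\eps|z|$ for the lower bound, and $d(x,z)\le d(x,0)+d(0,z)=|x|+\sqrt{4\pi|z|}$ together with $(A+B)^2\le(1+\eps)A^2+(1+\eps^{-1})B^2$ for the upper bound, which is how the specific constant $20\pi/\eps$ arises. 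Your proposed case split on $\theta$ with Taylor expansions can be made to work (for $\theta$ below the threshold where $\nu(\theta)=4/\pi$ the $|x|^2$ term controls everything, and above it the $|z|$ term does), but "tracking constants carefully" hides essentially all of the effort, whereas the triangle-inequality route delivers the stated constants with no further calculus; if you keep your route, you should at least verify that your constants match the ones claimed in the statement.
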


The first part of this lemma {\it i.e.} \eqref{eq:equivdist}, is well known \cite{Var}. We here take the opportunity to find the best constants. The estimate \eqref{eq:equivdistprecise} can be found in \cite{LM} in a less precise form.

\begin{proof} The inequalities are obvious when $x=0$ or $z=0$ so we assume that $x,z\not=0$.

Let $F\,:[0,\pi]\to\R$ be defined by $F(0)=1$ and, for $\theta\in]0,\pi]$,
$$
F(\theta)=\frac{\theta^2}{\sin^2\theta+\theta-\sin\theta\cos\theta}=\frac{\theta^2}{\theta+\sin\theta(\sin\theta-\cos\theta)}.
$$

Note that, if $\theta$ is the unique solution in $[0,\pi[$ of $\nu(\theta)=\dst\frac{4|z|}{|x|^2}$, then
$$
F(\theta):=\frac{(\theta/\sin\theta)^2}{1+\nu(\theta)}=\frac{d(x,z)^2}{|x|^2+4|z|}.
$$
It is therefore enough to check that $\pi/4=F(\pi/4)\leq F(\theta)\leq F(\pi)=\pi$.

But
\begin{eqnarray*}
F'(\theta)&=&\frac{2\theta}{\theta+\sin\theta(\sin\theta-\cos\theta)}\\
&&-\frac{\theta^2}{\bigl(\theta+\sin\theta(\sin\theta-\cos\theta)\bigr)^2}\bigl(1+\cos\theta(\sin\theta-\cos\theta)+\sin\theta(\sin\theta+\cos\theta)\bigr)\\
&=&\frac{2\theta}{\bigl(\theta+\sin\theta(\sin\theta-\cos\theta)\bigr))^2}\times\\
&&\quad\times\bigl[2\bigl(\theta+\sin\theta(\sin\theta-\cos\theta)\bigr)-\theta\bigl(1+\cos\theta(\sin\theta-\cos\theta)+\sin\theta(\sin\theta+\cos\theta)\bigr)
\bigr]
\end{eqnarray*}
which has same sign as
$$
\psi(\theta):=\theta\bigl(1-\sin\theta(\sin\theta+\cos\theta)\bigr)+(2\sin\theta-\theta\cos\theta)(\sin\theta-\cos\theta).
$$
Notice that $\psi(\pi/4)=0$ and that
\begin{eqnarray*}
\psi'(\theta)&=&1+\theta
+(\cos\theta+\theta\sin\theta)(\sin\theta-\cos\theta)
+(\sin\theta-\theta\cos\theta)(\sin\theta+\cos\theta)\\
&=&\theta(1-\cos 2\theta-\sin 2\theta)+1-\cos 2\theta+\sin 2\theta.
\end{eqnarray*}
Now $\psi'(\pi/4)=0$ and
$$
\psi''(\theta)=1+\cos 2\theta+\sin 2\theta+2\theta(\sin 2\theta-\cos 2\theta)
=1+\cos(2\theta)(1-2\theta)+\sin 2\theta(1+2\theta).
$$
From this, it is obvious that $\psi''\geq 2-\pi/2$ on $[0,\pi/4]$, thus $\psi'\leq 0$ on $[0,\pi/4]$
therefore $\psi$ thus $F$ decreases on $[0,\pi/4]$. In particular, $\pi/4=F(\pi/4)\leq F(\theta)\leq F(0)=1$
for $\theta\in[0,\pi/4]$.

For $\theta\in[\pi/4,3\pi/4]$ one easily checks that $1-\cos 2\theta-\sin 2\theta\geq 0$
and $1-\cos 2\theta+\sin 2\theta\geq 0$ thus $\psi'\geq 0$, thus $\psi$ increases and is therefore non-negative.
This in turn means that $F'$ is non negative, thus $\pi/4=F(\pi/4)\leq F(\theta)\leq F(3\pi/4)$ for
$\theta\in[\pi/4,3\pi/4]$. 

Finally, on $[3\pi/4,\pi]$, $1+\cos(2\theta)+\sin2\theta\leq 1$ while $\cos(2\theta)-\sin(2\theta)\leq -1$
thus $\psi''(\theta)\leq 2(1-\theta)\leq 0$. Therefore $\psi'(\theta)\geq\psi'(\pi)=0$ and $\psi$ is increasing,
$\psi(\theta)\geq\psi(3\pi/4)=3\pi/4+2>0$. It follows that $F$ is increasing on $[3\pi/4,\pi]$ thus
$F(\pi/4)\leq F(3\pi/4)\leq F(\theta)\leq F(\pi)=\pi$ for $\theta\in [3\pi/4,\pi]$.

The proof of \eqref{eq:equivdistprecise} then follows the steps of \cite{LM}: on one hand,
$$
d(x,z)^2\geq \max\bigl(d(x,0)^2,d(0,z)^2\bigr)\geq \max(|x|^2,\pi|z|)\geq (1-\eps)|x|^2+\pi\eps|z|.
$$
On the other hand
$$
d(x,z)^2\leq \bigl(d(x,0)+d(0,z)\bigr)^2\leq(|x|+\sqrt{4\pi|z|})^2\leq (1+\eps)|x|^2+\frac{20\pi}{\eps}|z|.
$$
as claimed.
\end{proof}

\subsection{Heat kernel}

\begin{definition}
The heat kernel $p_t$ for $G$ is the unique fundamental solution to the corresponding heat
equation $\dst\left(\lll-\frac{\partial}{\partial t}\right)u=0$
that is, $p_t = e^{t\lll}\delta_0$, where $\delta_0$ is the Dirac delta distribution supported at $0$.

In particular, if $\dst\left(\lll-\frac{\partial}{\partial t}\right)u=0$ and $u(x,z,0)=u_0(x,z)\in L^2(\R^{2n+m})$
then $u(x,z,t)=u_0*p_t(x,z)$.
\end{definition}

Our next step is to record an explicit formula for $p_t (x, z)$. Various derivations of this formula appear in the literature.
For general step 2 nilpotent groups, \cite{Ga} derived such a formula probabilistically from a formula in \cite{Le} regarding
the L\'evy area process. Another common approach, worked out in \cite{DP}, involves expressing $p_t$ as the Fourier transform
of the Mehler kernel. Taylor \cite{Ta} has a similar computation. Other approaches have involved complex Hamiltonian
mechanics \cite{BGG}, magnetic field heat kernels \cite{Ki}, and approximation of Brownian motion by random walks 
\cite{Hu}. Of particular interest to us is the approach by Randall \cite{Ra} who obtains the formula for H-type groups as the
Radon transform of the heat kernel for the Heisenberg group.
In our notation, we find that
\begin{equation}
\label{eq:hul}
p_t(x,z)=t^{-n-m}\int_{\R^m}
e^{-\frac{\pi}{2}|\lambda|\coth(2\pi |\lambda|)|t^{-1/2}x|^2}
\left(\frac{|\lambda|/2\pi}{2\sinh(2\pi |\lambda|)}\right)^n
e^{2i\pi\scal{\lambda,t^{-1}z}}\d \lambda\\
\end{equation}
Also this is not obvious from the above formula, $p_t$ is non-negative and $p_t*p_{t'}=p_{t+t'}$.
Note that $p_t(x,z)=t^{-m-n}p_1(t^{-1/2}x,t^{-1}z)$ {\it i.e.}  $p_t=t^{-m-n}\ffi_{t^{-1/2}}p_1$.

Precise estimates of the heat kernel have been obtained by Elridge \cite{El}:
\begin{equation}
\label{eq:elredge}
p_t(x,z)\simeq 
t^{-n-m}\frac{\bigl(1+t^{1/2}d(x,z)\bigr)^{2n+m-1}}{1+\big(t|x|d(x,z)\big)^{n-1/2}}e^{-\frac{1}{4t}d(x,z)^2}.
\end{equation}
Further, Randall \cite{Ra} has shown that  $p_t$ admits an analytic extension with respect to the time parameter $t$ to the half-complex plane $\Real(t)>0$.
We will need a slightly less precise estimate than \eqref{eq:elredge}, but that will also be valid for complex time $t$:

\begin{lemma}[Estimate of the heat kernel]
\label{lem:estheat}
For every $\eps\in(0,1)$, there exists $C=C(\eps)$ such that, for every $t>0$ 
$$
\frac{1}{C t^{n+m}}\exp\left(-\frac{1}{4t}\bigl((1+\eps)|x|^2+\frac{20\pi}{\eps}|z|\bigr)\right)
\leq p_t(x,z)\leq \frac{C}{ t^{n+m}}\exp\left(-\frac{1}{4t}\bigl((1-\eps)|x|^2+\pi\eps|z|\bigr)\right).
$$
Moreover, if $t\in\mathbb{C}$ with $\Real(t)>0$, 
$$
p_t(x,z)\leq \frac{C}{ \Real(t)^{n+m}}\exp\left(-\Real\frac{1}{4t}\bigl((1-\eps)|x|^2+\pi\eps|z|\bigr)\right).
$$

\end{lemma}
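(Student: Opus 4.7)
Plan: I would handle the real and complex cases separately. For real $t>0$ the estimates come from Eldridge's sharp bound \eqref{eq:elredge} together with the distance inequalities \eqref{eq:equivdistprecise}, and for complex $t$ I would use the integral representation \eqref{eq:hul} combined with a contour shift.

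For real $t$, rewrite Eldridge's bound as $p_t(x,z)\simeq t^{-n-m}P(t^{1/2}d(x,z))e^{-d(x,z)^{2}/4t}$, where a short case analysis on the rational prefactor (using $|x|\leq d$ to get $(t|x|d)^{n-1/2}\lesssim s^{2n-1}$ when $s=t^{1/2}d$) gives $1/2\leq P(s)\leq (1+s)^{2n+m-1}$ for all $s\geq 0$. The elementary absorption $(1+s)^{k}e^{-s^{2}/4}\leq C_{\eps'}e^{-(1-\eps')s^{2}/4}$ then yields $c\,t^{-n-m}e^{-d(x,z)^{2}/4t}\leq p_t(x,z)\leq C_{\eps'}t^{-n-m}e^{-(1-\eps')d(x,z)^{2}/4t}$. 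Plugging the lower part of \eqref{eq:equivdistprecise} into the upper bound on $p_t$, and the upper part of \eqref{eq:equivdistprecise} into the lower bound on $p_t$, produces the first two inequalities of the lemma after redefining $\eps$.

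For complex $t$ with $\Real(t)>0$, the integral in \eqref{eq:hul} extends analytically in $t$ (by Randall). Naively taking moduli inside the integral fails, since $|e^{2i\pi\scal{\lambda,z}/t}|=\exp(2\pi\scal{\lambda,z}\Im(1/t))$ is not integrable in $\lambda$. I would therefore shift the contour $\lambda\mapsto\lambda+i\nu$ with $\nu=\tfrac{1}{2}z/|z|$. The integrand depends on $\lambda$ only through $|\lambda|^{2}$ via the even functions $\coth(2\pi|\lambda|)$ and $|\lambda|/\sinh(2\pi|\lambda|)$, each analytic in $|\lambda|^{2}$ up to the nearest pole at $|\lambda|^{2}=-1/4$; for $|\nu|<1/2$ the shifted point $|\lambda+i\nu|^{2}=|\lambda|^{2}-|\nu|^{2}+2i\scal{\lambda,\nu}$ stays away from this pole, so Cauchy's theorem legitimizes the deformation. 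On the shifted contour $|e^{2i\pi\scal{\lambda+i\nu,z}/t}|=\exp(-\pi|z|\Real(t)/|t|^{2})$, which provides the $|z|$-decay with coefficient $\pi\Real(1/t)$, more than enough to dominate the $\pi\eps|z|\Real(1/4t)$ term of the target for $\eps\leq 4$. The $|x|^{2}$-decay is supplied by $|e^{-a(\lambda+i\nu)|x|^{2}/t}|$, whose exponent has real part $\gtrsim \Real(t)/(4|t|^{2})\,|x|^{2}$ coming from $a(0)=1/4$. Combining these and applying \eqref{eq:equivdistprecise} closes the complex case.

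The main obstacle is carrying out the contour shift rigorously and uniformly in all parameters: one must check that $\Real(a(\lambda+i\nu)/t)$ remains large enough along the entire shifted contour (not merely near $\lambda=0$) to give the Gaussian decay in $|x|^{2}$, and that the Schwartz-type decay of $\Phi(\lambda+i\nu)$ in $\lambda$ survives. A robust but less quantitative alternative is a Phragm\'en--Lindel\"of argument: $\log|p_t(x,z)|$ is subharmonic in $t$ on $\Real(t)>0$; using the real-axis bound from the first paragraph as boundary data, and a crude growth estimate $|p_t(x,z)|\lesssim \Real(t)^{-n-m}$ (obtained directly from \eqref{eq:hul}) to control behavior at infinity, one can propagate the estimate throughout the right half-plane.
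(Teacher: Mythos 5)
Your treatment of real $t$ is exactly the paper's: combine Eldredge's estimate \eqref{eq:elredge} with the distance comparison \eqref{eq:equivdistprecise}, absorbing the polynomial prefactor into an arbitrarily small loss in the Gaussian. The details you supply (using $|x|\le d$ to control the denominator, so the ratio is bounded below by a constant and above by $(1+s)^{2n+m-1}$, then $(1+s)^{k}e^{-s^{2}/4}\le C_{\eps'}e^{-(1-\eps')s^{2}/4}$) are correct and are precisely what the paper leaves implicit.

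For complex $t$, your primary route (contour shift in \eqref{eq:hul}) has a genuine gap that you sense but underestimate. First, the choice $\nu=\tfrac12 z/|z|$ has $|\nu|=1/2$: for $\lambda\perp\nu$ and $|\lambda|\to0$ one gets $|\lambda+i\nu|^{2}\to-1/4$, so the shifted contour hits the first pole of the integrand. You must take $|\nu|<1/2$, and in fact $|\nu|$ small: as $|\nu|\uparrow1/2$ the coefficient of $-|x|^{2}$ in the exponent, namely $\tfrac{\pi}{2}\Real\bigl(\mu\coth(2\pi\mu)/t\bigr)$ with $\mu^{2}=|\lambda|^{2}-|\nu|^{2}+2i\scal{\lambda,\nu}$, tends to $-\infty$ near $\lambda=0$ (since $s\cot(2\pi s)\to-\infty$ as $s\to 1/2^{-}$), so the shifted integrand grows exponentially in $|x|^{2}$ and all $x$-decay is lost; there is an unavoidable trade-off between the $|z|$-decay gained and the $|x|^{2}$-decay lost, which is why the constants $\pi\eps$ and $1-\eps$ are coupled in the statement. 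Second, and more seriously, even for small $|\nu|$ one has $\Real\bigl(\mu\coth(2\pi\mu)/t\bigr)=A\,\Real(1/t)-B\,\Im(1/t)$ with $B=\Im\bigl(\mu\coth(2\pi\mu)\bigr)$ taking both signs on the shifted contour; for $t$ near the imaginary axis, where $|\Im(1/t)|\gg\Real(1/t)$, this real part is negative on part of the contour, so taking absolute values inside the shifted integral cannot give the claimed bound uniformly in $t$. Your fallback is the paper's actual proof: $p_t(x,z)$ is holomorphic in $t$ on $\Real(t)>0$ (Randall), the sharp bound holds on the positive real axis by the first part, and a Phragm\'en--Lindel\"of argument on the two quadrants, exactly as in Theorem 3.4.8 of \cite{Da}, propagates it to the half-plane. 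One correction there: the crude bound $|p_t(x,z)|\lesssim(\Real t)^{-n-m}$ does not follow from \eqref{eq:hul} by taking moduli (the factor $e^{-2\pi\scal{\lambda,z}\Im(1/t)}$ defeats the $e^{-2\pi n|\lambda|}$ decay of the Mehler factor when $|z|\,|\Im(1/t)|$ is large); it follows instead from $p_t=p_{\sigma/2}*p_{i\tau}*p_{\sigma/2}$ for $t=\sigma+i\tau$, unitarity of $e^{i\tau\lll}$ on $L^{2}$, and $\norm{p_{\sigma/2}}_{L^{2}}^{2}=p_{\sigma}(0)\simeq\sigma^{-n-m}$. I recommend making the Phragm\'en--Lindel\"of argument the main proof and dropping the contour shift.
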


\begin{proof}
For $t$ real, this formula follows directly from incorporating \eqref{eq:equivdistprecise} into \eqref{eq:elredge} and is well known up to the numerical constants (see \cite[p 50]{Var}).

For complex $t$, we use the fact that $p_t$ admits an analytic extension and the result follows by an application of Phragm\'en-Lindel\"of as in Theorem 3.4.8 of \cite{Da}.
\end{proof}

We are now in position to prove our first result:

\begin{theorem}
\label{th:gaussian}
For every $x\in\R^n$, there exists a non-negative finite measure $\nu_x$ on $[0,+\infty)$ such that
$$
\int_0^{+\infty}\frac{1}{\tau}\,\mbox{d}\nu_x(\tau)<+\infty
$$
and for every $z\in\R^m$,
$$
p_1(x,z)=\int_0^{+\infty}\frac{1}{\tau}e^{-\pi|z|^2/\tau^2}\d\nu_x(\tau).
$$
\end{theorem}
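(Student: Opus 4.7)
The plan is to extract a Laplace representation directly from \eqref{eq:hul} via Bernstein's theorem, and then to Fourier-invert termwise to turn it into a superposition of Gaussians in $z$. Setting $t=1$ and fixing $x$, \eqref{eq:hul} reads
$$
p_1(x,z)=\int_{\R^m}G_x(|\lambda|^2)e^{2i\pi\scal{\lambda,z}}\,\d\lambda,\qquad G_x(u):=\exp\Bigl(-\tfrac{\pi|x|^2}{2}\phi(u)-n\psi(u)\Bigr),
$$
where $\phi(u):=\sqrt{u}\coth(2\pi\sqrt{u})$ and $\psi(u):=\log\frac{4\pi\sinh(2\pi\sqrt{u})}{\sqrt{u}}$. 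If $G_x$ is completely monotone on $[0,+\infty)$, Bernstein's theorem furnishes a finite non-negative measure $\mu_x$ on $[0,+\infty)$ with $G_x(u)=\int_0^{+\infty}e^{-su}\,\d\mu_x(s)$ and total mass $G_x(0)=(8\pi^2)^{-n}e^{-|x|^2/4}$.

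The heart of the argument, and what I expect to be the main obstacle, is showing that $h:=-\log G_x=\frac{\pi|x|^2}{2}\phi+n\psi$ is a Bernstein function of $u$; for then $G_x=e^{-h}$ is completely monotone by the classical fact that $e^{-h}$ is completely monotone for every Bernstein $h$. I would handle each piece separately using classical expansions. For $\phi$, the Mittag--Leffler formula $\coth y=\frac{1}{y}+2y\sum_{k\geq 1}\frac{1}{y^2+k^2\pi^2}$ at $y=2\pi\sqrt{u}$ gives
$$
\phi(u)=\frac{1}{2\pi}+\frac{1}{\pi}\sum_{k=1}^{\infty}\frac{4u}{4u+k^2},
$$
and each summand $u\mapsto\frac{u}{u+a}$ ($a>0$) has completely monotone derivative $\frac{a}{(u+a)^2}$. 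For $\psi$, the Weierstrass product $\sinh(x)=x\prod_{k\geq 1}(1+\frac{x^2}{k^2\pi^2})$ yields
$$
\psi(u)=\log(8\pi^2)+\sum_{k=1}^{\infty}\log\Bigl(1+\frac{4u}{k^2}\Bigr),
$$
with each $u\mapsto\log(1+au)$ having completely monotone derivative $\frac{a}{1+au}$. Positive combinations of Bernstein functions being Bernstein, both $\phi$ and $\psi$, and hence $h$, are Bernstein.

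The remaining steps are bookkeeping. Inserting the Laplace representation into the Fourier integral, applying Fubini (justified by positivity), and evaluating the inner Gaussian Fourier transform $\int_{\R^m}e^{-s|\lambda|^2}e^{2i\pi\scal{\lambda,z}}\,\d\lambda=(\pi/s)^{m/2}e^{-\pi^2|z|^2/s}$ yields
$$
p_1(x,z)=\int_0^{+\infty}(\pi/s)^{m/2}e^{-\pi^2|z|^2/s}\,\d\mu_x(s).
$$
The change of variable $\tau=\sqrt{s/\pi}$ together with the definition $\d\nu_x(\tau):=\tau^{1-m}\,\d\tilde\mu_x(\tau)$, where $\tilde\mu_x$ is the pushforward of $\mu_x$ under $s\mapsto\sqrt{s/\pi}$, produces the stated identity. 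The integrability $\int_0^{+\infty}\tau^{-1}\,\d\nu_x(\tau)=p_1(x,0)<+\infty$ is then immediate by taking $z=0$. Finiteness of $\nu_x$ itself -- only delicate when $m\geq 2$ because of the factor $\tau^{1-m}$ near $0$ -- follows from the asymptotic $G_x(u)=O(e^{-c\sqrt{u}})$ as $u\to\infty$ with $c>0$ (evident from $\phi(u),\psi(u)\sim\sqrt u$), which by optimising the trivial bound $\mu_x([0,\eps])\leq e^{\eps u}G_x(u)$ in $u$ gives $\mu_x([0,\eps])\leq Ce^{-c^2/(4\eps)}$; this super-exponential decay of $\mu_x$ near $0$ absorbs the polynomial singularity of $\tau^{1-m}$ at $\tau=0$.
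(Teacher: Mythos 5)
Your proof is correct, and it reaches the Gaussian-mixture representation by a genuinely different route than the paper. The paper observes that $P_{n,m}(x,\cdot)$ is the Fourier transform of the radial extension of $\ffi_x$ to $\R^m$ and is non-negative for \emph{every} $m$ (via Randall's Radon-transform identity \eqref{eq:radon}), and then invokes Schoenberg's theorem: a radial function that is positive definite in all dimensions is a scale mixture of Gaussians. You instead verify directly that $u\mapsto\ffi_x(\sqrt{u})$ is completely monotone, by writing its negative logarithm as a Bernstein function through the Mittag--Leffler expansion of $\coth$ and the Weierstrass product for $\sinh$, and then apply the Hausdorff--Bernstein--Widder theorem. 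These are two faces of the same criterion (Schoenberg's condition is precisely complete monotonicity of $f(\sqrt{\cdot\,})$), but your version is self-contained: it needs neither the positivity of $P_{n,m}$ for all $m$ --- in particular for $m\geq\rho(2n)$, where $P_{n,m}$ is no longer a heat kernel --- nor the Radon-transform formula, and it makes the Laplace exponent of the mixing measure explicit, which is of some interest given the paper's remark that $\nu_x$ could not be determined. Your bookkeeping at the end is also more careful than the paper's: the Gaussian Fourier integral produces a factor $\tau^{-m}$, not $\tau^{-1}$, so the measure must be renormalized by $\tau^{1-m}$ exactly as you do (the paper's displayed computation silently assumes $m=1$). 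Your super-exponential bound $\mu_x([0,\eps])\leq Ce^{-c^2/(4\eps)}$ correctly disposes of the resulting singularity at $\tau=0$, although this is more than is needed: Tonelli applied to $\int_{\R^m}G_x(|\lambda|^2)\,\d\lambda=p_1(x,0)<+\infty$ already gives $\int_0^{+\infty}\tau^{-m}\,\d\tilde\mu_x(\tau)<+\infty$, which dominates $\nu_x([0,1])$ for every $m\geq1$.
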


\begin{proof}
Let us write
$$
P_{n,m}(x,z)=\int_{\R^m}
e^{-\frac{\pi}{2}|\lambda|\coth(2\pi |\lambda|)|x|^2}
\left(\frac{|\lambda|/2\pi}{2\sinh(2\pi |\lambda|)}\right)^n
e^{2i\pi\scal{\lambda,z}}\d \lambda.
$$
In particular, $P_{n,1}$ is the heat kernel of the Heisenberg group, as shown by Hulanicki \cite{Hu}. Randall \cite{Ra} showed that
\begin{equation}
\label{eq:radon}
P_{n,m}(x,z)=c_{n,m}\int_{\S^{m-1}}P_{n,1}(\scal{z,\theta})\d\sigma(\theta)
\end{equation}
where $\sigma$ is the normalized Lebesgue measure on the unit sphere $\S^{m-1}$ of $\R^m$ and $c_{n,m}$ is a normalization constant. Further, when $m<\rho(2n)$, Randall has shown that $P_{n,m}$ is the heat kernel on $\R^{2n}\times\R^m$ with the $H$-type group structure
mentioned above.

One consequence of \eqref{eq:radon} is that, for every $m$, $P_{n,m}$ is a non-negative function.
On the other hand, let us denote by $\ffi_x$ the function defined on $\R$ by
$$
\ffi_x(u)=e^{-\frac{\pi}{2}u\coth(2\pi u)|x|^2}
\left(\frac{u/2\pi}{2\sinh(2\pi u)}\right)^n.
$$
Then $P_{n,m}$ is the Fourier transform of the radial extension to $\R^m$ of $\ffi_x$, {\it i.e.}
$P_{n,m}(x,z)=\ff[\ffi_x(|\lambda|)](z)$ where $\ff$ is the usual Fourier transform on $\R^m$.
It then follows from a celebrated theorem of Schoenberg \cite{Sc} (see also \cite{SvP} for a more
modern proof and further references), that $\ffi_x$ is an average of Gaussians with respect to some
finite non-negative measure $\nu_x$
$$
\ffi_x(u)=\int_0^{+\infty}e^{-\pi \tau^2u^2}\,\mbox{d}\nu_x(\tau).
$$
One easily checks that $\int_0^\infty\ffi_x(u)\,\mbox{d}u$ is finite, thus Fubini's theorem yelds
$$
\int_0^\infty\ffi_x(u)\,\mbox{d}u=\int_0^{+\infty}\int_0^\infty e^{-\pi \tau^2u^2}\,\mbox{d}u\,\mbox{d}\nu_x(\tau)
=\int_0^{+\infty}\frac{1}{\tau}\,\mbox{d}\nu_x(\tau)<+\infty.
$$
It follows from Fubini's theorem again that
$$
p_1(x,z)=\int_{\R^m}\int_0^{+\infty}e^{-\pi\tau^2|\lambda|^2}\d\nu_x(\tau)e^{2i\pi\scal{\lambda,z}}\d \lambda
=\int_0^{+\infty}\frac{1}{\tau}e^{-\pi|z|^2/\tau^2}\d\nu_x(\tau)
$$
as claimed.
\end{proof}

\begin{remark}
It would be nice if the measure $\nu_x$ could be determined explicitly. Our attempts to do so have not succeeded.
\end{remark}

We are now in position to prove that the heat kernel on $H$-type groups is not characterized 
via an optimal decrease condition at two different times. This is in strong contrast with the Euclidean case as shows Theorem \ref{th:intro1} that we recall for the reader's convenience:

\begin{theorem}[Limiting case of Hardy's Uncertainty Principle]
For every $s_0$, there exists a non-zero, non-negative function $f$, that is not a multiple of $p_{s_0}$ such that, 
for every $s\geq s_0$, the solution $u$ of $\dst\begin{cases}\partial_su=\lll u\\ u(x,0)=f\end{cases}$
is such that $u(x,s)\leq p_{s_0+s}$.
\end{theorem}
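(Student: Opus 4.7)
The plan is to exploit the Gaussian--mixture representation of Theorem \ref{th:gaussian} to manufacture an $f$ satisfying $0\le f\le p_{s_0}$ pointwise but that is not a scalar multiple of $p_{s_0}$; since $p_s\ge 0$, pointwise domination is preserved under group convolution and yields
$$
u(x,z,s)=(f*p_s)(x,z)\le (p_{s_0}*p_s)(x,z)=p_{s_0+s}(x,z)
$$
for every $s\ge 0$, hence a fortiori for $s\ge s_0$.

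To begin, the dilation identity $p_{s_0}=s_0^{-n-m}\ffi_{s_0^{-1/2}}\cdot p_1$ combined with the substitution $\sigma = s_0\tau$ in Theorem \ref{th:gaussian} yields, for each $x$, a representation
$$
p_{s_0}(x,z)=\int_0^{+\infty}\frac{1}{\sigma}\,e^{-\pi|z|^2/\sigma^2}\d\mu_x(\sigma)
$$
where $\mu_x$ is a non-negative measure on $(0,+\infty)$ satisfying $\int_0^{+\infty}\sigma^{-1}\d\mu_x(\sigma)<+\infty$. I then fix any bounded measurable $h\tq(0,+\infty)\to[0,1]$ that is not $\mu_x$-essentially constant on $\supp\mu_x$; the simplest choice is $h=\chi_{[a,b]}$ for some $0<a<b<+\infty$. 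Setting
$$
f(x,z):=\int_0^{+\infty}\frac{h(\sigma)}{\sigma}\,e^{-\pi|z|^2/\sigma^2}\d\mu_x(\sigma),
$$
the bound $0\le h\le 1$ together with non-negativity of the integrand gives $0\le f\le p_{s_0}$ pointwise, and $f$ is non-zero because $h$ is strictly positive on a set charged by $\mu_x$ for a set of $x$ of positive Lebesgue measure.

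The delicate step, and the one I expect to be the main obstacle, is to check that $f$ is not a scalar multiple of $p_{s_0}$. If $f=cp_{s_0}$ on $G$, then equating the two integral representations at a fixed $x$ and invoking the injectivity of the Laplace transform (equivalently, the uniqueness of the Schoenberg representation that is used in the proof of Theorem \ref{th:gaussian}, after pushing $\sigma^{-1}\d\mu_x(\sigma)$ forward via $\sigma\mapsto \pi/\sigma^2$) forces $h(\sigma)=c$ for $\mu_x$-almost every $\sigma$. To produce a contradiction, one needs to know that $\mu_x$ is not a Dirac mass on a set of $x$ of positive measure. This is read off from the explicit expression of $\ffi_x$ appearing in the proof of Theorem \ref{th:gaussian}: the transcendental factors $\coth(2\pi u)$ and $\sinh(2\pi u)$ prevent $\ffi_x(u)$ from coinciding with a pure Gaussian $e^{-\pi\tau^2 u^2}$, and by uniqueness of the Schoenberg representation this rules out $\mu_x$ being concentrated at a single point. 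With $\mu_x$ thus known to have non-trivial support, the choice $h=\chi_{[a,b]}$ can be made so that $h$ fails to be $\mu_x$-a.e.\ constant, producing the required contradiction.

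Once this non-proportionality is established, the conclusion is immediate: the pointwise inequality $0\le f\le p_{s_0}$ together with $p_s\ge 0$ and the semigroup property $p_{s_0}*p_s=p_{s_0+s}$ give the desired bound $u(x,z,s)\le p_{s_0+s}(x,z)$ for every $s\ge 0$, and in particular for every $s\ge s_0$, completing the proof.
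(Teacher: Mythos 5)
Your proposal is correct and follows the same overall strategy as the paper's proof: both start from the Gaussian--mixture representation of Theorem \ref{th:gaussian}, damp the mixing measure $\nu_x$ to manufacture an $f$ with $0\le f\le p_{s_0}$, and conclude via positivity of $p_s$ and the semigroup law $p_{s_0}*p_s=p_{s_0+s}$. The one genuine divergence is in how non-proportionality is verified. The paper restricts the $\tau$-integration to one side of $\tau=1$ and argues by decay rates in the central variable: the truncated integral is $O(e^{-\pi|z|^2})$, whereas Eldredge's estimate \eqref{eq:elredge} shows $p_1$ decays only exponentially in $|z|$, so $f=o(p_1)$ as $|z|\to\infty$ and cannot equal $cp_1$. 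You instead push $\sigma^{-1}h(\sigma)\d\mu_x(\sigma)$ forward under $\sigma\mapsto\pi/\sigma^2$ and invoke injectivity of the Laplace transform to deduce that $f=cp_{s_0}$ would force $h=c$ $\mu_x$-a.e., then rule this out because $\mu_x$ is not a point mass. Your route is more robust --- it works for any non-constant damping $h$ and avoids the heat-kernel asymptotics --- at the price of needing the uniqueness half of the Schoenberg/Bernstein representation. Two small points to nail down: (i) with $h=\chi_{[a,b]}$ you must choose $[a,b]$ so that, for some fixed $x_0$, both $\mu_{x_0}([a,b])>0$ (this is also what makes $f\not\equiv 0$) and $\mu_{x_0}\bigl((0,\infty)\setminus[a,b]\bigr)>0$; both hold once $\supp\mu_{x_0}$ contains two distinct points, by taking $[a,b]$ a small closed interval around one of them; (ii) the non-Dirac claim deserves a one-line justification, and the paper's own observation supplies it: if $\nu_x$ had bounded (in particular one-point) support, $\ffi_x$ would extend to an entire function of order $2$, contradicting $|\ffi_x(iu)|\to+\infty$ as $u\to k/2$, $k\in\Z\setminus\{0\}$.
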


\begin{proof} We will only prove it for $s_0=1$, a straightforward scaling argument gives the general result.

Recall from Theorem \ref{th:gaussian} that 
$$
p_1(x,z)=\int_0^{+\infty}\frac{1}{\tau}e^{-\pi|z|^2/\tau^2}\d\nu_x(\tau)
$$
with $\nu_x$ a finite measure such that
$$
\int_0^{+\infty}\frac{1}{\tau}\,\mbox{d}\nu_x(\tau)<+\infty.
$$
In particular, $\nu_x$ has essentially no mass at $0$. However, if there were an $a_x$ such that $\nu_x([0,a_x])=0$ then we would have
$\ffi_x\lesssim e^{-\pi a_x^2|u|^2}$ which is obviously not the case.
On the other hand, if the support of $\nu_x$ where included in $[0,a_x]$, then $\ffi_x$ would extend into an entire 
function of order $2$, $|\ffi_x(u+iv)|\lesssim e^{\pi a_x^2 v^2}$. But this is again not the case
since 
$$
|\ffi_x(iu)|=\left(\frac{|u|/2\pi}{|\sin 2\pi u|}\right)^n\to +\infty
$$
when $u\to k/2$, $k\in\Z\setminus\{0\}$.

Let us now consider $f$ defined by
$$
f(x,z)=\int_1^{+\infty}\frac{1}{\tau}e^{-\pi|z|^2/\tau^2}\d\nu_x(\tau).
$$

From the properties of the measure $\nu_x$, we get that $f\not=0$, $0\leq f\leq p_1$ and, according to Elridge's 
estimates of $p_t$ and the fact that $f(x,z)\leq e^{-\pi|z|^2}$, we get $f(x,z)=o\bigl(p_1(x,z)\bigr)$ when $|z|\to\infty$,
so that $f$ is not a multiple of $p_1$. Further $f*p_t\lesssim p_1*p_t=p_{t+1}$. 
\end{proof}

\section{The Schr\"odinger equation in $H$-type groups: A "real" approach}

In this section we deal with a solution of the Schr\"odinger equation for the sub-Laplacian of an $H$-type group $G$
(isomorphic to $\mathbb{R}^{2n}\times\mathbb{R}^m$),
\begin{equation}\label{eq:SchrodHtype}
i\partial_tu(g,t)+\mathcal{L}_Gu(g,t)+V(g,t)u(g,t)=0,\ \ (g,t)\in G\times[0,T],
\end{equation}
where $g=(x,z),\ x\in\mathbb{R}^{2n},\ z\in\mathbb{R}^m$. We assume that the potential satisfies the following hypothesis.

\begin{hypothesis}\label{potential}
The potential $V$ is independent of the central variable and can be written in the form
$V(x,z,t)=V_1(x)+V_2(x,t)$  where, 
\begin{itemize}
\item $V_1$ is a real-valued bounded potential;

\item for some $a,b,T>0$,
\begin{equation}
\label{eq:CFcond}
\sup_{t\in[0,T]}\sup_{x\in\mathbb{R}^{2n}}|e^{\frac{T^2|x|^2}{(a t+b(T-t))^2}}V_2(x,t)| 
<+\infty.
\end{equation}
\end{itemize}
\end{hypothesis}

Note that, if $V$ satisfies Hypothesis \ref{potential} for some $a,b$ then it also satisfies the hypothesis for $a'\geq a$ and $b'\geq b$.
Also \eqref{eq:CFcond} implies that
$$
\sup_{t\in[0,T]}\sup_{x\in\mathbb{R}^{2n}}|\Im V_2(x,t)|<+\infty.
$$

\begin{theorem}
\label{th:SchroHtypeUP}
Let $a,b,T>0$ and let $V$ be a potential satisfying Hypothesis \ref{potential} with parameters $a,b,T$.
Let $u\in C^1([0,T], H^1(G))$ be a solution of \eqref{eq:SchrodHtype}. Assume that there are $c,C>0$ and a function $g\in L^1(\R^m)$ such that
\begin{eqnarray}
|u(x,z,0)|&\le& Ce^{-c|z|-|x|^2/b^2},\label{eq:hard1}\\
|u(x,z,T)|&\le& g(z)e^{-|x|^2/a^2}\label{eq:hard2}
\end{eqnarray}
If $ab<4T$, then $u\equiv0$.
\end{theorem}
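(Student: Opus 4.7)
The strategy, foreshadowed in the introduction, is to reduce \eqref{eq:SchrodHtype} to a family of magnetic Schrödinger equations on $\R^{2n}$ parametrized by a frequency $\lambda$ conjugate to the central variable $z$, and then invoke the Hardy-type uncertainty principle of Cassano--Fanelli \cite{CF}. The key point that makes this reduction work is that the potential $V$ is independent of $z$, so conjugation by the Fourier transform in $z$ decouples the problem into a family of equations on $\R^{2n}$.

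Concretely, for $\lambda\in\R^m$ set $v_\lambda(x,t)=\int_{\R^m}u(x,z,t)e^{-2i\pi\scal{\lambda,z}}\d z$. Using the explicit form of the left-invariant fields
$$
X_j=\frac{\partial}{\partial x_j}+\frac{1}{2}\sum_{k=1}^m\scal{J_{u_k}x,e_j}\frac{\partial}{\partial z_j},
$$
a direct computation shows that, in the $\lambda$-variable, $X_j$ is converted into the magnetic derivative $\partial_{x_j}+i\pi\scal{J_\lambda x,e_j}$ where $J_\lambda=\sum_k\lambda_kJ_{u_k}$. Hence $\widehat{\lll u}(x,\lambda,t)=\lll_\lambda v_\lambda(x,t)$ where $\lll_\lambda=\sum_j(\partial_{x_j}+iA_j(x))^2$ is the magnetic Laplacian with vector potential $A(x)=\pi J_\lambda x$. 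Since $V$ is independent of $z$, I obtain that for each $\lambda$
$$
i\partial_tv_\lambda(x,t)+\lll_\lambda v_\lambda(x,t)+V(x,t)v_\lambda(x,t)=0,\quad (x,t)\in\R^{2n}\times[0,T].
$$
The magnetic field associated with $A$ is constant (since $A$ is linear in $x$), and the skew-symmetry and orthogonality properties of $J_\lambda$ furnish exactly the structure handled in \cite{CF}.

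The decay hypotheses transfer painlessly. The exponential factor $e^{-c|z|}$ in \eqref{eq:hard1} makes $u(x,\cdot,0)$ integrable in $z$ uniformly in $x$, so
$$
|v_\lambda(x,0)|\le \int_{\R^m}|u(x,z,0)|\d z\le C'e^{-|x|^2/b^2}.
$$
Similarly, since $g\in L^1(\R^m)$, \eqref{eq:hard2} yields $|v_\lambda(x,T)|\le\|g\|_{L^1}e^{-|x|^2/a^2}$. The potential $V=V_1+V_2$ is as required by \cite{CF}: $V_1$ is bounded real-valued, while the Gaussian decay condition \eqref{eq:CFcond} on $V_2$ is modelled precisely on the hypothesis used there. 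Applying the Cassano--Fanelli Hardy uncertainty principle for magnetic Schrödinger equations, the condition $ab<4T$ forces $v_\lambda\equiv 0$ on $\R^{2n}\times[0,T]$. Since this holds for every $\lambda\in\R^m$, Fourier inversion in the central variable gives $u\equiv 0$, as wanted.

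The main obstacles I expect are bookkeeping rather than conceptual. First, one must verify carefully that $u\in C^1([0,T],H^1(G))$ together with the pointwise bounds at $t=0,T$ is enough regularity/decay to apply the Fourier transform in $z$ rigorously (in particular to justify the identity $\widehat{\lll u}=\lll_\lambda\hat u$ and to ensure $v_\lambda\in C^1([0,T],H^1_A(\R^{2n}))$ in the sense required by \cite{CF}). Second, one must check that the linearly growing vector potential $A(x)=\pi J_\lambda x$ and the constant magnetic field $B=-2\pi J_\lambda$ indeed satisfy the structural hypotheses of \cite{CF}; this is essentially the case of the twisted Laplacian, which is the model problem in the magnetic setting. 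Once these matters are settled, the logarithmic convexity machinery of \cite{CF} does all the analytic work and the theorem follows.
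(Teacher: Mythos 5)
Your reductions (scaling to $T=1$, then a Fourier transform in the central variable producing a family of magnetic Schr\"odinger equations on $\R^{2n}$) match the paper's in substance: the paper first performs a partial Radon transform in $z$ to pass to the Heisenberg group and then a one-dimensional Fourier transform in the remaining central variable, which by the projection--slice identity is the same as your direct $m$-dimensional Fourier transform $v_\lambda$ restricted to a ray $\lambda=\xi\eta$. The transfer of the Gaussian bounds at $t=0,T$ and of the hypotheses on $V$ is also handled as you describe.

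The gap is in your final step, where you assert that the Cassano--Fanelli theorem applied to $v_\lambda$ under the condition $ab<4T$ forces $v_\lambda\equiv0$ \emph{for every} $\lambda\in\R^m$. That is not what \cite{CF} gives. For a constant magnetic field of strength $|\lambda|$ the admissible threshold in the Hardy-type uncertainty principle degrades with the field: after normalizing $T=1$, the hypothesis needed is of the form $ab<4\sin(|\lambda|)/|\lambda|$, which is strictly smaller than $4$ and degenerates as $|\lambda|$ grows (this is the harmonic-oscillator/Mehler periodicity phenomenon that \cite{CF} is precisely designed to handle). So from $ab<4$ one can only pick $\delta>0$ with $ab<4\sin\delta/\delta$ and conclude $v_\lambda(\cdot,0)=0$ for $0<|\lambda|<\delta$; for large $|\lambda|$ the quoted theorem says nothing. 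The missing idea is the one that explains why the hypothesis at $t=0$ is $e^{-c|z|}$ rather than mere integrability in $z$: this exponential decay makes $\lambda\mapsto v_\lambda(x,0)$ extend holomorphically to a strip $\{\,|\Im\lambda|<c\,\}$ in each direction, so vanishing on the interval $(0,\delta)$ propagates by analytic continuation to all real $\lambda$, after which Fourier inversion gives $u(\cdot,\cdot,0)=0$ and hence $u\equiv0$. You used the $e^{-c|z|}$ decay only to bound $|v_\lambda(x,0)|$ by a Gaussian in $x$, which discards exactly the information needed to close the argument.
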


\begin{remark}
The condition $s\sigma<4T$ is essentially optimal in the sense that the result does not hold when 
$s\sigma>4T$. Indeed, take $V=0$, $\eps,T>0$
and $u(x,z,0)=p_{\eps T}(x,z)$ so that
$u(x,z,T)=p_{(\eps+i)T}(x,z)$. It follows from Lemma \ref{lem:estheat} that
$$
|u(x,z,0)|\leq C_{\eps} e^{-\frac{\pi}{4T} |z|}e^{-|x|^2\frac{(1-\eps)}{4\eps T}}
$$
while
$$
|u(x,z,T)|\leq C_{\eps,T} e^{-\frac{\pi\eps^2}{4(1+\eps^2)T} |z|}e^{-|x|^2\frac{(1-\eps)\eps}{4(1+\eps^2)T}}
$$
Thus \eqref{eq:hard1} is satisfied with $b^2=\frac{4\eps T}{(1-\eps)}$
 while
\eqref{eq:hard1} is satisfied with $a^2=\frac{4(1+\eps^2)T}{(1-\eps)\eps}$.
It follows that
$$
a^2b^2=16T^2\frac{1+\eps^2}{(1-\eps)^2}
$$
and any number $>16T^2$ can be written in this form.

Note that if $|u(x,z,0)|\leq C e^{-d(x,z)^2/\beta^2}$ and $|u(x,z,T)|\leq C e^{-d(x,z)^2/\alpha^2}$
then, with \eqref{eq:equivdistprecise}, we get that \eqref{eq:hard1}-\eqref{eq:hard2} hold with $a=(1+\eps)\alpha$, $b=(1+\eps)\beta$ and $\eps>0$ arbitrarily small. It follows that, if $\alpha\beta <4T$ then $u\equiv0$. The above example again shows that this is false if $\alpha\beta >4T$.
\end{remark}

\begin{proof}
The proof of this theorem is then done in several steps.

\medskip

\noindent{\bf Step 1.} {\em Reduction to $T=1$.}

\medskip

Note that if $u$ is a solution of \eqref{eq:SchrodHtype} 
on $G\times [0,T]$ and $U(x,z,t)=u(\sqrt{T}x,T z,Tt)$ then $U$ is a solution of
\begin{equation}
\label{eq:scaleshrod}
i\partial_t U=\mathcal{L}_GU+V_T U
\end{equation}
in $G\times [0,1]$ where $V_T(x,z,t)=T V(\sqrt{T}x,Tz,Tt)$.
Note that $V_T$ satisfies Hypothesis \ref{potential} with parameters $\tilde a=a/T,\tilde b=b/T,1$.
Moreover, \eqref{eq:hard1} implies that 
$$
|U(x,z,0)|\leq Ce^{-cT|z|-T|x|^2/b^2}=e^{-cT|z|-|x|^2/\tilde b^2}
$$
while \eqref{eq:hard2} implies that $|U(x,z,1)|\leq g(Tz)e^{-|x|^2/\tilde a^2}$.
Therefore, it is enough to prove Theorem \eqref{th:SchroHtypeUP} for $T=1$.

\medskip

\noindent{\bf Step 2.} {\em Reduction to the Heisenberg group.}

\medskip

This reduction has been pointed out in \cite{BTD,Ra} and is done via a partial Radon transform:

\begin{proposition}\label{prop:HtypetoHeis}
Let $u$ and $V$ be as in Theorem \ref{th:SchroHtypeUP}.
Let $\eta\in\S^{m-1}$ be a unit vector in $\mathbb{R}^m$ and $\eta^\perp=\{\xi\in\mathbb{R}^m:\langle \xi,\eta\rangle=0\}$. Define,
$$
U_\eta(x,s,t)=\int_{\eta^\perp}u(x,s\eta+\tilde{z},t)\d \tilde{z},\ \ x\in\mathbb{R}^{2n},\ s\in\mathbb{R},\ t\in[0,1].
$$

Then $U_\eta$ is a solution of the following Schr\"odinger equation
\begin{equation}
\label{eq:shrodheis}
i\partial_t U_\eta+\mathcal{L}_{\mathbb{H}^n}U_\eta+V(x,t)U_\eta=0,\ \ (x,s,t)\in \mathbb{H}^n\times[0,1],
\end{equation}
where $\mathcal{L}_{\mathbb{H}^n}$ stands for the sub-Laplacian on the Heisenberg group $\mathbb{H}^n$.
Moreover, there exists $C_1$ and $c_1$ positive constants depending only on $C$ and $c$ such that, for $(x,s)\in\mathbb{H}^n$
\begin{eqnarray*}
|U_\eta(x,s,0)|&\le& C_1e^{-c_1|s|-|x|^2/b^2}, \\
|U_\eta(x,s,1)|&\le& \tilde{g}(s)e^{-|x|^2/a^2}
\end{eqnarray*}
where $\dst\tilde{g}(s):=\int_{\eta^\perp}g(s\eta+\tilde{z})\d\tilde{z}$ is the Radon transform of $g$.
\end{proposition}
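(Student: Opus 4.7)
The plan is to check by direct computation that integration over $\eta^\perp$ intertwines each H-type vector field $X_j$ on $G$ with a corresponding Heisenberg vector field on $\mathbb{H}^n$, from which the full proposition follows. The underlying algebraic fact is that since $|\eta|=1$, the map $J_\eta$ is antisymmetric and orthogonal, so $J_\eta^2=-I$; in a suitable symplectic basis it coincides with the standard complex structure on $\R^{2n}$, and the Lie algebra on $\R^{2n}\times\R$ with bracket $[x,y]=\scal{J_\eta x,y}$ is that of $\mathbb{H}^n$.

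First I would rewrite $X_j=\partial_{x_j}+\tfrac{1}{2}v_j(x)\cdot\nabla_z$ where $v_j(x)\in\R^m$ is characterized by $\zeta\cdot v_j(x)=\scal{J_\zeta x,e_j}$ for every $\zeta\in\R^m$; this is immediate from $J_\zeta=\sum_{k=1}^m\zeta_k J_{u_k}$. Splitting $\nabla_z=\eta\,\partial_s+\nabla_{\tilde z}$ along $\R\eta\oplus\eta^\perp$, one finds
$$
\int_{\eta^\perp}(X_j u)(x,s\eta+\tilde z,t)\d\tilde z=\tilde X_j U_\eta(x,s,t),\qquad \tilde X_j:=\partial_{x_j}+\tfrac{1}{2}\scal{J_\eta x,e_j}\partial_s,
$$
the $\nabla_{\tilde z}$-contribution being a divergence in $\tilde z$ that vanishes upon integration by parts. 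The $\tilde X_j$'s are exactly the left-invariant horizontal vector fields on $\mathbb{H}^n$ attached to the complex structure $J_\eta$. Iterating this identity yields $\int_{\eta^\perp}X_j^2 u\,\d\tilde z=\tilde X_j^2 U_\eta$, and summing over $j$ gives $\int_{\eta^\perp}\mathcal{L}_G u\,\d\tilde z=\mathcal{L}_{\mathbb{H}^n}U_\eta$. Because $V(x,t)$ is independent of $z$, applying $\int_{\eta^\perp}\cdot\,\d\tilde z$ to \eqref{eq:SchrodHtype} and commuting $\partial_t$ with the integral yields \eqref{eq:shrodheis}.

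For the decay estimates, Fubini gives at once $|U_\eta(x,s,1)|\leq \tilde g(s)e^{-|x|^2/a^2}$ with $\tilde g\in L^1(\R)$. For the initial time, the inequality $\sqrt{s^2+|\tilde z|^2}\geq(|s|+|\tilde z|)/\sqrt 2$ yields
$$
|U_\eta(x,s,0)|\leq Ce^{-|x|^2/b^2}\int_{\eta^\perp}e^{-c\sqrt{s^2+|\tilde z|^2}}\d\tilde z\leq C_1 e^{-c_1|s|-|x|^2/b^2},
$$
with $c_1=c/\sqrt 2$ and $C_1=C\int_{\R^{m-1}}e^{-c_1|\tilde z|}\d\tilde z$. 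The main technical point is the justification of bringing $X_j$ and $\partial_t$ inside the $\tilde z$-integral and of the vanishing of the boundary terms on $\eta^\perp$. Under the sole hypothesis $u\in C^1([0,T],H^1(G))$ this is not automatic at every time, but since the coefficients of $X_j$ depend only on $x$ the operator commutes with $z$-translations, so a standard approximation argument in a $z$-weighted norm using the endpoint bounds \eqref{eq:hard1}--\eqref{eq:hard2} reduces us to the case of data with rapid decay in $z$, where Fubini and integration by parts apply directly.
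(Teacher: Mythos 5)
Your proposal is correct, and for the decay estimates it coincides with the paper's argument: both use $|s\eta+\tilde z|=\sqrt{s^2+|\tilde z|^2}\ge(|s|+|\tilde z|)/\sqrt 2$ to factor the exponential and obtain $c_1=c/\sqrt 2$, and Fubini for the bound at the final time. The difference lies in the PDE part: the paper does not prove that $U_\eta$ solves \eqref{eq:shrodheis} at all, but simply cites \cite{BTD,Ra}, whereas you carry out the intertwining computation directly. Your computation is sound: writing $X_j=\partial_{x_j}+\tfrac12 v_j(x)\cdot\nabla_z$ with $\zeta\cdot v_j(x)=\scal{J_\zeta x,e_j}$, splitting $\nabla_z$ along $\R\eta\oplus\eta^\perp$, and killing the $\nabla_{\tilde z}$-part by integration (its coefficient is independent of $\tilde z$) gives $\int_{\eta^\perp}X_ju\,\d\tilde z=\tilde X_jU_\eta$ with $\tilde X_j=\partial_{x_j}+\tfrac12\scal{J_\eta x,e_j}\partial_s$; since $J_\eta^2=-I$, an orthogonal change of the $x$-variable (which leaves $V(x,t)$, the hypotheses on it, and the Gaussian weights invariant) turns $\sum_j\tilde X_j^2$ into the standard Heisenberg sub-Laplacian. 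This buys a self-contained proof where the paper relies on external references. Your closing caveat is well taken: under the sole assumption $u\in C^1([0,T],H^1(G))$ the convergence of the $\tilde z$-integral at intermediate times and the vanishing of the boundary terms are not automatic, and your proposed approximation argument is only sketched; but the paper (and its references) operate at the same level of rigor on this point, so this is not a gap relative to the published proof.
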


Note that $\tilde{g}(s)$ is well defined for almost every $s,\eta$ and that $\tilde{g}\in L^1(\R)$ for almost every $\eta$.

\begin{proof}
The fact that $U_\eta$ satisfies \eqref{eq:shrodheis} is established in \cite{BTD,Ra}. It remains to establish the
Gaussian bounds on $U$. Since $\dst
U_\eta(x,s,t)=\int_{\eta^\perp}u(x,s\eta+\tilde{z},t)\d \tilde{z},
$
we have
$$
|U_\eta(x,s,0)|\le \int_{\eta^\perp}|u(x,s\eta+\tilde{z},0)|\d\tilde{z}\le C e^{-|x|^2/b^2}\int_{\eta^\perp}e^{-c|s\eta+\tilde{z}|}\d\tilde{z}.
$$
Taking into account that $\tilde{z}\in\eta^\perp$, and that $\eta$ is a unit vector, we get
$$
|s\eta+\tilde{z}|^2=s^2+|z|^2\Rightarrow |s\eta+\tilde{z}|\ge \frac{|s|+|\tilde{z}|}{\sqrt{2}}.
$$
Using this estimate leads to the desired decay for $U_\eta(x,s,0)$. Moreover,
$$
|U_\eta(x,s,1)|\le \int_{\eta^\perp}|u(x,s\eta+\tilde{z},1)|\d\tilde{z}\le  e^{-|x|^2/a^2}\int_{\eta^\perp}g(s\eta+\tilde{z})\d\tilde{z}
$$
as claimed.
\end{proof}

\medskip

\noindent{\bf Step 3.} {\em Reduction to the magnetic Laplacian on $\R^n$.}

\medskip

We now show that Schr\"odinger equations on the Heisenberg group can be seen as magnetic Schr\"odinger equations.

\begin{proposition}\label{prop:HeistoMag}
Let $u$ and $V$ be as in Theorem \ref{th:SchroHtypeUP}. Let $U_\eta$ be defined in Proposition \ref{prop:HtypetoHeis}.

For $\xi\in\mathbb{R}$, consider the partial Fourier transform of $U$ in the central variable:
$$
f_\xi(x,t)=\int_{\R}U_\eta(x,z,t)e^{i\xi z}\d z.
$$
Then $f_\xi$ is a solution of the following magnetic Schr\"odinger equation
$$
i\partial_t f_\xi+\Delta_Cf_\xi+Vf_\xi=0,
$$
where $\Delta_C=(\nabla-iC_\xi)^2$ and $C_\xi=\frac{M_\xi x}{2}$ with 
$M_\xi=\xi\left(\begin{matrix}0&-I_{n\times n}\\ I_{n\times n}&0\end{matrix}\right)$.

Moreover $f_\xi(x,0)$ admits a holomorphic extension to $f_{\xi+i\eta}(x,0)$ in $\{|\eta|<c\}$. Further,
\begin{equation}
\label{eq:estfxi}
\begin{matrix}
|f_\xi(x,0)|&\le& Ce^{-|x|^2/b^2},\\
|f_\xi(x,1)|&\le& \|\tilde g\|_{L^1}e^{-|x|^2/a^2}.
\end{matrix}
\end{equation}
\end{proposition}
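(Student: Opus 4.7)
The strategy is a direct conjugation computation. On $\mathbb{H}^n\simeq\R^{2n}\times\R$, the construction in Section 2 gives horizontal vector fields of the form $X_j = \partial_{x_j} + \frac{1}{2}(Jx)_j \partial_z$ with $J=\left(\begin{matrix}0 & -I_{n\times n}\\ I_{n\times n} & 0\end{matrix}\right)$, so that $\mathcal{L}_{\mathbb{H}^n} = \sum_{j=1}^{2n} X_j^2$. Under the partial Fourier transform $U_\eta \mapsto f_\xi$ in $z$, the operator $\partial_z$ is intertwined with multiplication by $-i\xi$; hence $X_j$ is intertwined with $\partial_{x_j} - \frac{i\xi}{2}(Jx)_j$, which is exactly the $j$-th component of $\nabla - iC_\xi$ with the choice $M_\xi = \xi J$ and $C_\xi = M_\xi x/2$ prescribed in the statement. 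The key technical observation to make is that the skew-symmetry of $J$ forces $\partial_{x_j}(C_\xi)_j = \frac{\xi}{2}J_{jj}=0$: this is what guarantees that the squared operator transforms cleanly, so that $X_j^2$ is intertwined with $(\partial_{x_j} - i(C_\xi)_j)^2$, no spurious first-order terms appearing. Summing over $j$ then gives $\mathcal{L}_{\mathbb{H}^n} \leftrightarrow (\nabla - iC_\xi)^2 = \Delta_C$.

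For the equation itself, Hypothesis \ref{potential} says that $V(x,t)$ is independent of the central variable, so multiplication by $V$ commutes with the transform in $z$, and $\partial_t$ trivially does as well. Applying the partial Fourier transform to \eqref{eq:shrodheis} therefore yields $i\partial_t f_\xi + \Delta_C f_\xi + V f_\xi = 0$ as claimed; the $C^1([0,T], H^1(G))$ regularity of $u$ combined with the Gaussian-exponential bounds of Proposition \ref{prop:HtypetoHeis} provides the dominated convergence needed to justify the interchange of derivatives and the $z$-integration.

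Finally, the endpoint bounds \eqref{eq:estfxi} and the holomorphic extension follow by integration. At $t=0$, integrating the bound $|U_\eta(x,z,0)|\le C_1 e^{-c_1|z|-|x|^2/b^2}$ in $z$ gives $|f_\xi(x,0)|\lesssim e^{-|x|^2/b^2}$ uniformly in $\xi$; at $t=1$, integrating against $\tilde g\in L^1$ gives $|f_\xi(x,1)|\le \|\tilde g\|_{L^1} e^{-|x|^2/a^2}$. The exponential decay $e^{-c_1|z|}$ also makes the integrand $U_\eta(x,z,0)e^{iz(\xi+i\eta)}$ absolutely convergent in the strip $|\Im\xi|<c_1$, and holomorphy there follows from Morera's theorem (or differentiation under the integral sign), taking $c=c_1$. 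I do not expect a serious obstacle: the only subtle point is keeping track of the identification with the prescribed matrix $M_\xi$ and verifying that squaring the intertwined first-order operators is gauge-clean, which is precisely what $J$ being skew-symmetric provides.
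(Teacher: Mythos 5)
Your proposal is correct and follows essentially the same route as the paper: a direct computation showing that the partial Fourier transform in the central variable intertwines $\mathcal{L}_{\mathbb{H}^n}$ with $\Delta_C$ (the paper expands $\mathcal{L}_{\mathbb{H}^n}=\Delta_{\R^{2n}}+\frac{1}{4}|x|^2\partial_z^2+\sum_j(x_j\partial_{n+j}-x_{n+j}\partial_j)\partial_z$ and then transforms, whereas you transform each $X_j$ and square, using that $(Jx)_j$ is independent of $x_j$; the two computations are identical in content). The bounds and the holomorphic extension are obtained exactly as you describe, by integrating the estimates of Proposition \ref{prop:HtypetoHeis} and using the exponential decay $e^{-c_1|s|}$ to extend $\xi\mapsto f_\xi(x,0)$ to a strip.
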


\begin{proof}
We recall that 
$$
\mathcal{L}= \sum_{j=1}^{2n}X_j^2= \Delta_{\mathbb{R}^{2n}}+\frac{1}{4}|x|^2\frac{\partial^2}{\partial z^2}+\sum_{j=1}^n\left(x_j\frac{\partial}{\partial x_{n+j}}-x_{n+j}\frac{\partial}{\partial x_j}\right)\frac{\partial}{\partial z}.
$$

Then,
\begin{eqnarray*}-i\partial_tf_\xi&=&-\int_{\R}i\partial_tUe^{i\xi z}\d z\\&=&\int_{\R}\Delta_{\R^{2n}} Ue^{i\xi z}\d z+\frac14|x|^2\int_{\R}\partial_{zz}Ue^{i\xi z}\d z+\sum_{j=1}^n\int_{\R}\left(x_j\partial_{n+j}\partial_zU-x_{n+j}\partial_j\partial_zU\right)e^{i\xi z}\d z\\&&+V(x,t)\int_{\R}Ue^{i\xi z}\d z
\\&=& \Delta_{\R^{2n}}f_\xi-\frac{\xi^2}{4}|x|^2f_\xi-i\xi\sum_{j=1}^n\left(x_j\partial_{n+j}f_\xi-x_{n+j}\partial_jf_\xi\right)+Vf_\xi=\Delta_Cf_\xi+Vf_\xi
\end{eqnarray*}
as claimed.

The last part of the proposition is immediate using the definition of $f_\xi$ as a partial Fourier transform in the central variable.
\end{proof}

\medskip

\noindent{\bf Step 4.} {\em Conclusion}

Using the previous step, $f\xi$ is a solution of
$$
i\partial_t f_\xi+\Delta_Cf_\xi+Vf_\xi=0,
$$
that satisfies the Gaussian estimates \eqref{eq:estfxi}. Further, since $ab<4$, there exists $\delta>0$ such that $ab<4\frac{\sin\delta}{\delta}$. Now, using Theorem 1.11 in \cite{CF} (in the case $A\equiv0$, which implies that the result is valid in any even dimension) combined with the previous results that give information about the evolution and decay of $f_\xi$, we conclude that,
for $0<\xi<\delta$, $t\in[0,1]$,  $f_\xi(x,t)=0$.
In particular, for $t=0$ we get that $f_\xi(x,0)=0$ for $0<\xi<\delta$. But, from Proposition \ref{prop:HeistoMag},
we know that $f_\xi(x,0)$ admits an holomorphic extension to a strip $\{\xi+i\zeta : |\zeta|<c\}$. It follows that
$f_\xi(x,0)=0$ on the whole real line $(\zeta=0)$. 

As $\xi$ is arbitrary and
$f_\xi(x,0)$ is the partial Fourier transform of $U_\eta(x,s,0)$, we conclude that $U_\eta(x,s,0)=0$.

Finally, $U_\eta$ is the Radon transform of $u$, we conclude that $u=0$, which is the desired result..
\end{proof}

As an immediate corollary, we can now obtain the following result, which is stated in a slightly more restrictive form as Theorem \ref{th:intro1} in the introduction:

\begin{corollary}
\label{cor:SchroHtypeUP}
Let $T>0$ and $s,\sigma>0$. Let $V$ be a potential satisfying Hypothesis \ref{potential} with parameters 
$2\sqrt{\sigma},2\sqrt{s},T$.
Let $u\in C^1([0,T], H^1(G))$ be a solution of \eqref{eq:SchrodHtype}. Assume that there is a $C>0$ such that
\begin{eqnarray*}
|u(x,z,0)|&\le& C p_{s}(x,z).\\
|u(x,z,T)|&\le& C p_{\sigma}(x,z).
\end{eqnarray*}
If $s\sigma<T^2$, then $u\equiv0$.
\end{corollary}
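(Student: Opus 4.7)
The plan is to derive the corollary as a direct application of Theorem \ref{th:SchroHtypeUP}, the only work being to convert the heat-kernel upper bounds $|u(\cdot,0)|\leq Cp_s$ and $|u(\cdot,T)|\leq Cp_\sigma$ into the anisotropic Gaussian-in-$x$/exponential-in-$z$ bounds \eqref{eq:hard1}--\eqref{eq:hard2}. The conversion tool is the upper estimate in Lemma \ref{lem:estheat}, which for any $\eps\in(0,1)$ furnishes a pointwise Gaussian majorant of $p_t$ whose variance in $|x|$ is $4t/(1-\eps)$ (slightly larger than $4t$) and whose decay in $|z|$ is exponential with rate $\pi\eps/(4t)$.

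First I would pick $\eps\in(0,1)$ small enough that $\sqrt{s\sigma}<(1-\eps)T$; this is possible precisely because the hypothesis $s\sigma<T^2$ is a strict inequality. Applying the upper bound in Lemma \ref{lem:estheat} to $p_s$ and $p_\sigma$ then yields constants $C_\eps>0$ for which
$$
|u(x,z,0)|\leq C_\eps\, e^{-|x|^2/b^2-c|z|},\qquad |u(x,z,T)|\leq C_\eps\, e^{-|x|^2/a^2-c'|z|},
$$
with $b=2\sqrt{s/(1-\eps)}$, $a=2\sqrt{\sigma/(1-\eps)}$, $c=\pi\eps/(4s)$ and $c'=\pi\eps/(4\sigma)$. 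Setting $g(z):=C_\eps e^{-c'|z|}$, which lies in $L^1(\R^m)$ by exponential decay, the bound at $t=T$ is exactly of the form \eqref{eq:hard2} and the bound at $t=0$ is of the form \eqref{eq:hard1}.

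Next I would verify the remaining hypotheses of Theorem \ref{th:SchroHtypeUP} for these parameters. The decay condition $ab<4T$ becomes $4\sqrt{s\sigma}/(1-\eps)<4T$, which holds by the choice of $\eps$. The potential condition requires that $V$ satisfy Hypothesis \ref{potential} with the \emph{enlarged} parameters $(a,b,T)$; since $1-\eps<1$ gives $a\geq 2\sqrt{\sigma}$ and $b\geq 2\sqrt{s}$, the monotonicity remark following Hypothesis \ref{potential} allows us to upgrade the assumed hypothesis for $(2\sqrt{\sigma},2\sqrt{s},T)$ to one for $(a,b,T)$. With all hypotheses of Theorem \ref{th:SchroHtypeUP} in place, the conclusion $u\equiv 0$ follows.

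I do not anticipate any genuine obstacle: the argument is essentially a careful matching of parameters, with the only subtle point being that the Gaussian majorant of $p_t$ in $|x|$ costs a factor $1/(1-\eps)$ in the variance. This loss is absorbed thanks to the \emph{strict} nature of the inequality $s\sigma<T^2$ together with the monotonicity of Hypothesis \ref{potential} in its parameters $a$ and $b$.
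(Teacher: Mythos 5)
Your proposal is correct and follows essentially the same route as the paper: apply the upper bound of Lemma \ref{lem:estheat} to convert the heat-kernel majorants into the form \eqref{eq:hard1}--\eqref{eq:hard2} with $a_\eps=2\sqrt{\sigma/(1-\eps)}$, $b_\eps=2\sqrt{s/(1-\eps)}$, use the strictness of $s\sigma<T^2$ to absorb the $1/(1-\eps)$ loss so that $a_\eps b_\eps<4T$, and invoke the monotonicity of Hypothesis \ref{potential} in $a,b$ before applying Theorem \ref{th:SchroHtypeUP}. Your write-up is in fact slightly more explicit than the paper's (which does not spell out the choice of $g$ or the monotonicity step), but the argument is identical.
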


\begin{remark} The case $V=0$ of this theorem has been obtained by 
Ben Sa\"id, Thangavellu, Doga \cite{BTD} for $H$-type groups and by
Ludwig and M\"uller \cite{LM} for general step 2 nilpotent Lie groups.
\end{remark}

\begin{proof}[Proof of Corollary \ref{cor:SchroHtypeUP}] 
From Lemma \ref{lem:estheat} we get
\begin{eqnarray*}
|u(x,z,0)|&\le& C_\eps \exp\left(-\frac{1}{4s}\bigl((1-\eps)|x|^2+\pi\eps|z|\bigr)\right)\\
|u(x,z,T)|&\le& C_\eps \exp\left(-\frac{1}{4\sigma}\bigl((1-\eps)|x|^2+\pi\eps|z|\bigr)\right).
\end{eqnarray*}
Let $a_\eps=2\sqrt{\frac{\sigma}{1-\eps}}$ and $b_\eps=2\sqrt{\frac{s}{1-\eps}}$. As $s\sigma<T^2$,
for $\eps$ small enough, $a_\eps b_\eps=\frac{4}{1-\eps}\sqrt{s\sigma}<4T$.

Further $V$ satisfies Hypothesis \ref{potential} with parameters $a_\eps,b_\eps,T$. Applying Theorem \ref{th:SchroHtypeUP} we thus get $u=0$.
\end{proof}

\section*{Acknowledgments}

This study has been carried out with financial support from the French State, managed
by the French National Research Agency (ANR) in the frame of the ``Investments for
the future'' Programme IdEx Bordeaux - CPU (ANR-10-IDEX-03-02).

A. F.-B. acknowledges financial support from ERCEA Advanced Grant 2014 669689 - HADE, the MINECO project MTM2014-53145-P and the Basque Government project IT-641-13.

P.J. acknowledges financial support from the French ANR program ANR-12-BS01-0001 (Aventures)
from the Austrian-French AMADEUS project 35598VB - Ch
argeDisq and from the Tunisian-French CMCU/Utique project 15G1504.

S.P.E. acknowledges financial support from the Mexican Grant PAPIIT-UNAM IN106418.

The second author thanks L. Bouattour for valuable conversation at the early stage of this project.

\end{document}